\documentclass[11pt]{amsart}

\usepackage[T1]{fontenc}
\usepackage[utf8]{inputenc}
\usepackage{amsmath,amssymb}
\usepackage{enumerate}
\usepackage{cite}
\usepackage{hyperref}
\hypersetup{hidelinks}

\numberwithin{equation}{section}

\theoremstyle{plain}
\newtheorem{theorem}{Theorem}[section]
\newtheorem{lemma}[theorem]{Lemma}

\newtheorem{proposition}[theorem]{Proposition}
\newtheorem*{theoremA}{Theorem A}
\newtheorem*{theoremB}{Theorem B}
\newtheorem*{theoremC}{Theorem C}
\newtheorem*{theoremD}{Theorem D}

\theoremstyle{definition}
\newtheorem{definition}[theorem]{Definition}

\theoremstyle{remark}

\newcommand{\mc}{\mathbb{C}}
\newcommand{\mr}{\mathbb{R}}
\newcommand{\ddc}{dd^c}
\DeclareMathOperator{\PSH}{PSH}
\renewcommand{\Cap}{\operatorname{Cap}}
\DeclareMathOperator{\loc}{loc}

\begin{document}

\title[Spherical--solid mean formula]
{An Exact Spherical--Solid Mean Formula and Vanishing Criteria for Lelong Numbers}

\author[F. Deng]{Fusheng Deng}
\address{School of Mathematical Sciences, University of Chinese Academy of
Sciences, Beijing 100049, P. R. China}
\email{fshdeng@ucas.ac.cn}

\author[Y. Yang]{Yizhe Yang}
\address{School of Mathematical Sciences, University of Chinese Academy of
Sciences, Beijing 100049, P. R. China}
\email{yangyizhe24@mails.ucas.ac.cn}

\author[Z. You]{Zhanpeng You}
\address{School of Mathematical Sciences, University of Chinese Academy of
Sciences, Beijing 100049, P. R. China}
\email{youzhanpeng23@mails.ucas.ac.cn}

\begin{abstract}
We establish that the Lelong number of a plurisubharmonic function equals the difference between its infinitesimal spherical mean and its corresponding ball mean. Building on this identity, we derive several equivalent criteria for plurisubharmonic functions with vanishing Lelong number. These criteria are formulated in terms of fixed-scale increments, as well as the asymptotic agreement of radial maxima, spherical and solid means, and radial regularizations. In addition, we prove a pointwise $L^1$
 -mean-oscillation characterization and a nonlinear characterization expressed via the relative extremal depth and the Bedford–Taylor capacity of deep sublevel sets.
\end{abstract}

\subjclass[2020]{Primary 32U05; Secondary 32U15, 32U20}
\keywords{Plurisubharmonic function, Lelong number, mean oscillation,
Bedford--Taylor capacity, relative extremal function}

\maketitle

\section{Introduction}

The Lelong number is a fundamental numerical invariant of a plurisubharmonic
singularity. Introduced by Lelong in the study of analytic sets and positive
currents \cite{Lelong1957}, it measures the logarithmic component of the
singularity; see \cite{DemaillyBook}. 

Classical descriptions of the Lelong number are expressed as quotient
asymptotics, in which natural radial quantities are divided by the singular
scale \(\log r\). The main result of this paper provides instead an additive
description: the Lelong number is encoded exactly by the limiting difference
between the spherical and solid means. We first introduce the notation needed
to state this identity. All balls below are centered at the origin, and
\[
        B(r):=\{z\in\mc^n: |z|<r\}.
\]
For a plurisubharmonic function \(u\), set
\[
        M_r=M(u,r):=\sup_{B(r)}u,
\]
\[
        S_r=S(u,r):=\frac{1}{\sigma(\partial B(r))}
        \int_{\partial B(r)}u\,d\sigma,
\]
\[
        V_r=V(u,r):=\frac{1}{|B(r)|}\int_{B(r)}u\,dV,
\]
where \(\sigma(\partial B(r))\) and \(|B(r)|\) denote, respectively, the
surface area of \(\partial B(r)\) and the volume of \(B(r)\).
If \(\rho\in C_c^\infty(B(1))\) is a nonnegative radial mollifier with
\(\int \rho\,dV=1\), we write
\[
        u_r(0):=\int_{\mc^n}u(-rw)\rho(w)\,dV(w).
\]

The definition of Lelong number is recalled as follows.
Let \(u\in\PSH(\Omega)\) and \(0\in\Omega\). The Lelong number of \(u\) at
the origin is
\[
        \nu(u,0):=\lim_{r\to0^+}\frac{S(u,r)}{\log r}.
\]
This limit is unchanged by adding a constant to \(u\). Equivalently,
\[
        \nu(u,0)=\liminf_{z\to0}\frac{u(z)}{\log |z|};
\]
see \cite{DemaillyBook,GZ}.

The classical mean-value formulas assert that
\[
        \nu(u,0)
        =\lim_{r\to0^+}\frac{M_r}{\log r}
        =\lim_{r\to0^+}\frac{S_r}{\log r}
        =\lim_{r\to0^+}\frac{V_r}{\log r};
\]
see \cite{Kiselman,DemaillyBook,GZ}. Our first theorem replaces these quotient
asymptotics by an exact additive formula.

\begin{theoremA}
Let \(u\) be plurisubharmonic in a neighborhood of the origin in \(\mc^n\).
Then
\[
        \lim_{r\to0^+}\bigl(S(u,r)-V(u,r)\bigr)
        =\frac{1}{2n}\nu(u,0).
\]
In particular, \(\nu(u,0)=0\) if and only if
\(S(u,r)-V(u,r)\to0\).
\end{theoremA}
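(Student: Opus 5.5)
Write \(S(t):=S(u,e^t)\). Then \(S\) is nondecreasing and convex in \(t\), and the slopes satisfy \(S'(t^\pm)\to\nu:=\nu(u,0)\) as \(t\to-\infty\). The plan is to express \(V(u,r)\) as a radial average of spherical means and then integrate by parts.

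\emph{Step 1: polar coordinates.} Writing the ball mean in polar coordinates gives
\[
V(u,r)=\frac{2n}{r^{2n}}\int_0^r S(u,s)\,s^{2n-1}\,ds .
\]
The right-hand side is a probability average of \(S(u,s)\) with respect to the weight \(2n s^{2n-1}r^{-2n}\,ds\) on \((0,r)\). Since \(2n\int_0^r s^{2n-1}r^{-2n}\,ds=1\), we can subtract to get
\[
S(u,r)-V(u,r)=\frac{2n}{r^{2n}}\int_0^r \bigl(S(u,r)-S(u,s)\bigr)s^{2n-1}\,ds .
\]

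\emph{Step 2: integration by parts.} The function \(s\mapsto S(u,s)\) is locally absolutely continuous on \((0,r]\), because it is convex in \(\log s\). Its derivative is \(\partial_s S(u,s)=S'(\log s)/s\) a.e. Integrating by parts, or equivalently applying Fubini to \(S(u,r)-S(u,s)=\int_s^r \partial_\tau S\,d\tau\), gives
\[
S(u,r)-V(u,r)=\frac{1}{r^{2n}}\int_0^r \partial_\tau S(u,\tau)\,\tau^{2n}\,d\tau
=\int_0^1 S'(\log(r\lambda))\,\lambda^{2n-1}\,d\lambda .
\]
Here we used the substitution \(\tau=r\lambda\) and \(\int_0^\tau s^{2n-1}\,ds=\tau^{2n}/(2n)\). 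The boundary term at \(0\) must vanish, i.e. \(s^{2n}(S(u,r)-S(u,s))\to0\). This holds because \(S(u,s)\ge \nu\log s - C\), so the difference is \(O(|\log s|)\) and is killed by \(s^{2n}\). If one prefers to avoid this, Fubini on the nonnegative integrand handles it directly.

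\emph{Step 3: passing to the limit.} As \(r\to0^+\), \(S'(\log(r\lambda))\) decreases monotonically to \(\nu\) for each fixed \(\lambda\in(0,1]\). Moreover, \(0\le S'(\log(r\lambda))\le S'(\log r_0)\) for all \(r\le r_0\), since \(S'\) is nondecreasing in \(t\) and \(r\lambda\le r_0\). Monotone or dominated convergence therefore yields
\[
S(u,r)-V(u,r)\;\longrightarrow\;\nu\int_0^1\lambda^{2n-1}\,d\lambda=\frac{\nu}{2n}.
\]
The statement that \(\nu(u,0)=0\) if and only if \(S(u,r)-V(u,r)\to0\) is immediate from this limit.

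\emph{Main obstacle.} The main point requiring care is the regularity needed for Step 2. We need that \(S\) is convex in \(\log r\), so that one-sided derivatives exist, are monotone, and converge to \(\nu\). We also need the integrability of \(S\) near \(0\), which ensures that \(V\) is finite and that the boundary term vanishes. Both are standard facts about plurisubharmonic functions (\(S\) is even convex in \(\log r\) for merely subharmonic \(u\), radialized). Granting them, everything else is one-variable calculus.
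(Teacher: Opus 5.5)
Your proof is correct and is essentially the paper's argument. The paper also writes $V(u,e^x)=2n\int_0^\infty\phi(x-\tau)e^{-2n\tau}\,d\tau$ with $\phi(x)=S(u,e^x)$ and lets $x\to-\infty$ using convexity; the only difference is that it applies dominated convergence to the increments $\phi(x)-\phi(x-\tau)\to\nu\tau$, whereas your extra integration by parts turns them into the slopes $\phi'(x-\tau)$, so monotone convergence applies directly.

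One side remark in your Step 2 is wrong. The bound $S(u,s)\ge\nu\log s-C$ can fail: take $u=\nu\log|z|-\sqrt{-\log|z|}$, for which $S(u,s)-\nu\log s=-\sqrt{-\log s}$ is unbounded below. Convexity does give $S(u,s)\ge S'(\log r_0)\log s-C$, which yields the same $O(|\log s|)$ control. Your Fubini route on the nonnegative integrand avoids the boundary term altogether, so the proof is unaffected.
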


Although Theorem~A is a consequence of standard mean-value
identities and logarithmic convexity, we have not found this precise formula
stated explicitly in the literature.

The point of Theorem A is not merely that it detects the vanishing of the
Lelong number. It transforms the logarithmic slope defining \(\nu(u,0)\) into
the limiting gap between two canonical averaging operators. In particular,
the spherical and solid means coincide asymptotically precisely when the
Lelong number vanishes. Combining this formula with logarithmic convexity
yields a complete family of linear criteria.

\begin{theoremB}
Let \(u\) and \(\rho\) be as above. The following conditions are equivalent,
where all limits are taken as \(r\to0^+\):
\begin{enumerate}[(i)]
    \item \(\nu(u,0)=0\);
    \item for some \(A>1\) and some \(Q\in\{M,S,V\}\),
    \[
            Q(u,Ar)-Q(u,r)\to0;
    \]
    \item at least one of the following limits holds:
    \[
    \begin{gathered}
        S_r-V_r\to0,\qquad S_r-u_r(0)\to0,\\
        M_r-V_r\to0,\qquad M_r-u_r(0)\to0.
    \end{gathered}
    \]
\end{enumerate}
If these conditions hold, the limit in \textup{(ii)} holds for every
\(A>1\) and every \(Q\in\{M,S,V\}\), and all the limits in \textup{(iii)}
hold simultaneously. Moreover, \(M_r-S_r\to0\).
\end{theoremB}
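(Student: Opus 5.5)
Throughout I use the convexity in \(t=\log r\) of \(M(u,e^t)\), \(S(u,e^t)\), \(V(u,e^t)\). These are nondecreasing convex functions of \(t\) on \((-\infty,\log r_0)\). Convexity gives that the slopes \(\frac{d^+}{dt}Q(u,e^t)\) are nondecreasing, with limit as \(t\to-\infty\) equal to \(\lim Q/\log r=\nu(u,0)=:\nu\), the classical mean-value formulas. Hence for \(Q\in\{M,S,V\}\) and \(A>1\),
\[
\nu\log A\le Q(u,Ar)-Q(u,r)\le \log A\cdot \tfrac{d^-}{dt}Q(u,e^t)\big|_{t=\log(Ar)}\longrightarrow \nu\log A .
\]
So \(Q(u,Ar)-Q(u,r)\to\nu\log A\) for every \(A>1\) and every \(Q\). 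This proves (i)\(\Leftrightarrow\)(ii), and it also shows that (i) makes (ii) hold for all \(A\) and all \(Q\).

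For (iii) I compare every quantity with its value at comparable radii. Monotonicity and the sub-mean-value inequalities give \(V_r\le S_r\le M_r\). The mollifier \(\rho\) is radial, so \(u_r(0)=\int_0^1 S(u,rs)\,d\mu(s)\) for a probability measure \(\mu\) on \([0,1]\); hence \(u_r(0)\le S_r\le M_r\). Theorem A says \(S_r-V_r\to\nu/(2n)\), which gives the first item of (iii) together with (i). The key extra ingredient is an upper bound on \(M_r\) by \(S_{Ar}\). Harnack's inequality applied to the Poisson kernel of \(B(Ar)\) for the subharmonic function \(u-M_{Ar}\le0\) gives
\[
M_r-M_{Ar}\le c_A\,(S_{Ar}-M_{Ar}),
\]
with \(c_A=\frac{(A-1)A^{2n-2}}{(A+1)^{2n-1}}>0\). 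Equivalently \(M_{Ar}-S_{Ar}\le c_A^{-1}(M_{Ar}-M_r)\). If \(\nu=0\), the right side tends to \(0\) by step one, so \(M_r-S_r\to0\); this is the ``moreover'' claim. Consequently \(M_r-V_r=(M_r-S_r)+(S_r-V_r)\to0\).

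For \(u_r(0)\), first note that \(\mu\) has some mass \(\delta>0\) on \([0,s_0]\) for suitable \(s_0<1\). Conversely, \(u_r(0)\ge S(u,\epsilon r)\cdot\mu([\epsilon,1])+\int_{[0,\epsilon)}S(u,rs)\,d\mu\). The singular piece near \(s=0\) is controlled because \(S(u,rs)\ge S(u,r)+\nu'\log s - C\) for small \(r\), where \(\nu'\) is a slope bound coming from convexity. Since \(\log s\) is integrable against \(\rho\) in dimension \(2n\), it is cleaner to write \(u_r(0)=\int S(u,r|w|)\rho(w)\,dV(w)\). Then
\[
S_r-u_r(0)=\int\bigl(S(u,r)-S(u,r|w|)\bigr)\rho(w)\,dV .
\]
The integrand is nonnegative and \(\le \sigma_r\log(1/|w|)\) for \(|w|<1\), where \(\sigma_r:=\frac{d^-}{dt}S(u,e^t)|_{t=\log r}\to\nu\). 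Moreover \(\sigma_r\log(1/|w|)\ge \nu\log(1/|w|)\), and the integrand tends pointwise to \(\nu\log(1/|w|)\). Dominated convergence then yields
\[
S_r-u_r(0)\to\nu\int\log\tfrac1{|w|}\,\rho\,dV=:\nu\,\kappa_\rho,\qquad\kappa_\rho>0 .
\]
Thus \(S_r-u_r(0)\to0\) iff \(\nu=0\). For \(M_r-u_r(0)\) and \(M_r-V_r\): each of these is at least the corresponding difference with \(S_r\), which tends to a positive multiple of \(\nu\). So each of them tending to \(0\) forces \(\nu=0\). Conversely, under \(\nu=0\) they tend to \(0\) by \(M_r-S_r\to0\). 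This closes (iii)\(\Leftrightarrow\)(i) with all limits simultaneous.

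The main obstacle I expect is the Harnack step comparing \(M_r\) with \(S_{Ar}\), needed to get \(M_r-S_r\to0\). That inequality is the standard Poisson-kernel estimate for nonpositive subharmonic functions on a ball (Hadamard/Harnack). I would state it with explicit constant and then check only that \(c_A>0\) is independent of \(r\) by scaling.
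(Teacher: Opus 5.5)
Your proof is correct, and it uses the same ingredients as the paper. These are logarithmic convexity together with the classical mean-value formulas, the Poisson-kernel (Harnack) comparison $M_r-M_{Ar}\le c_A(S_{Ar}-M_{Ar})$ for $M_r-S_r\to0$ (the paper takes $A=2$ with an unspecified constant; your $c_A$ is right), Theorem~A for $S_r-V_r$, and dominated convergence with a log-convexity majorant for $u_r(0)$.

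Where you differ is in how you extract the criteria from convexity.

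\begin{itemize}
\item The paper proves a purely qualitative Ces\`aro-type statement (Lemma~\ref{lem:convex-quotient}: $f(x)/x\to0$ if and only if $f(x+c)-f(x)\to0$). For the converse in the $u_r(0)$ case it uses the one-line comparison $S_r-S_{ar}\le S_r-u_r(0)$, where $\operatorname{supp}\rho\subset B(a)$.
\item You instead use that the one-sided slopes of $t\mapsto Q(u,e^t)$ decrease to $\nu(u,0)$ as $t\to-\infty$. This gives the exact asymptotics $Q(u,Ar)-Q(u,r)\to\nu(u,0)\log A$ and $S_r-u_r(0)\to\nu(u,0)\int\log(1/|w|)\,\rho\,dV$.
\end{itemize}

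With these limits, both directions of (i)$\Leftrightarrow$(ii) and of the $u_r(0)$ criterion follow at once. You also obtain exact companions of Theorem~A for the fixed-scale increments and for the radial regularization, which the paper's qualitative route does not produce. The paper's route is slightly lighter, since it never identifies the limit of the slopes with $\nu(u,0)$; yours is the more informative.

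One cleanup: in your $u_r(0)$ paragraph, the sentences about the mass of $\mu$ on $[0,s_0]$ and the bound $S(u,rs)\ge S(u,r)+\nu'\log s-C$ are superseded by the dominated-convergence argument that follows, and can simply be deleted.
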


Several ingredients of Theorem~B are classical or implicit in earlier work.
The logarithmic convexity of radial maxima and mean values, together with the
interpretation of the Lelong number as their asymptotic slope on the
logarithmic scale, leads naturally to the fixed-scale increment criterion in
\textup{(ii)}; see \cite{Kiselman,DemaillyBook,GZ}. Chen and Wang
\cite{ChenWang} systematically studied the upper oscillation \(M_r-V_r\),
while Biard and Wu \cite{BiardWu} used closely related comparisons among
radial maxima, spherical means, and solid means in their characterization of
plurisubharmonic functions with zero Lelong numbers by vanishing mean
oscillation. To the best of our knowledge, however, the equivalence of all the
conditions in \textup{(ii)}--\textup{(iii)}, including those involving radial
regularizations, has not previously been stated in this unified pointwise
form. Theorem~A further sharpens the criterion involving \(S_r-V_r\) by
identifying its limit exactly.

To state the mean-oscillation characterization, define
\[
        \omega_1(u,r)
        :=\frac{1}{|B(r)|}\int_{B(r)}|u-V_r|\,dV.
\]

\begin{theoremC}
Let \(u\) be plurisubharmonic near the origin. Then
\[
        \nu(u,0)=0
        \quad\Longleftrightarrow\quad
        \omega_1(u,r)\to0
        \quad\text{as }r\to0^+.
\]
\end{theoremC}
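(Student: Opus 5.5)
We prove the two implications separately, using Theorems A and B as established.

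\emph{($\Leftarrow$).} Suppose \(\omega_1(u,r)\to0\). We exploit the fact that the mean oscillation controls how much the solid mean changes between comparable scales. Since \(B(r/2)\subset B(r)\) and \(|B(r)|=2^{2n}|B(r/2)|\), we have
\[
|V_{r/2}-V_r|
=\Bigl|\frac{1}{|B(r/2)|}\int_{B(r/2)}(u-V_r)\,dV\Bigr|
\le \frac{|B(r)|}{|B(r/2)|}\,\omega_1(u,r)
=2^{2n}\omega_1(u,r)\to0 .
\]
Thus \(V(u,2s)-V(u,s)\to0\), which is condition (ii) of Theorem~B with \(Q=V\) and \(A=2\). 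Hence \(\nu(u,0)=0\).

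\emph{($\Rightarrow$).} Suppose \(\nu(u,0)=0\). The idea is that \(M_r\) is an upper envelope for \(u\) on \(B(r)\) whose distance from the mean tends to zero, which lets us convert the absolute value into a signed quantity. Since \(u\le M_r\) on \(B(r)\), we have
\[
|u-V_r|\le (M_r-u)+(M_r-V_r)
\]
pointwise on \(B(r)\). Averaging over \(B(r)\) and using that the average of \(M_r-u\) equals \(M_r-V_r\), we get
\[
\omega_1(u,r)\le 2\,(M_r-V_r).
\]
By Theorem~B, vanishing Lelong number gives \(M_r-V_r\to0\), so \(\omega_1(u,r)\to0\).

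One degenerate case needs separate treatment: if \(u\equiv-\infty\) near the origin, then \(\nu(u,0)=+\infty\) and \(\omega_1\) is undefined. We exclude this case, as is implicit in the statement, so that \(u\in L^1_{\loc}\).

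Both directions reduce entirely to Theorem~B: the forward direction uses its conclusion that \(M_r-V_r\to0\), and the reverse direction uses its fixed-scale criterion (ii) with \(Q=V\). The only point requiring care is that the comparison \(|u-V_r|\le 2M_r-u-V_r\) relies on \(M_r\) being finite, which holds because \(u\) is upper semicontinuous.
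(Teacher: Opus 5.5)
Your forward direction is the same as the paper's: on $B(r)$ you bound $|u-V_r|\le (M_r-u)+(M_r-V_r)$, which gives $\omega_1(u,r)\le 2(M_r-V_r)$, and then you use $M_r-V_r\to0$ from Theorem~B.

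For the converse you take a genuinely different and more elementary route, and it is correct. The paper's route is heavier. It applies the Chern--Levine--Nirenberg inequality with a cut-off $\chi_r$ to bound the trace mass $\int_{B(r)}dd^c u\wedge(dd^c|z|^2)^{n-1}$ by $C r^{-2}\int_{B(2r)}|u-V_{2r}|\,dV$. It then uses the trace-measure description of $\nu(u,0)$. You instead average $u-V_r$ over the smaller ball to get $|V_{r/2}-V_r|\le 4^n\,\omega_1(u,r)$. That is exactly the fixed-scale criterion (ii) of Theorem~B with $Q=V$ and $A=2$.

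Your argument avoids currents, cut-offs and CLN entirely, and it can be made quantitative. Since $x\mapsto V(u,e^x)$ is convex with asymptotic slope $\nu(u,0)$ at $-\infty$, every increment satisfies $V_r-V_{r/2}\ge \nu(u,0)\log 2$. Hence $\nu(u,0)\le \frac{4^n}{\log 2}\,\omega_1(u,r)$ for every small $r$, so even $\liminf_{r\to0}\omega_1(u,r)=0$ would suffice.

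What the paper's route buys is an estimate on the mass of the current $dd^c u$ in a ball, controlled by the oscillation on a concentric larger ball. The authors say this is the idea behind their streamlined proof of the global VMO theorem of Biard--Wu. For the pointwise statement here, your argument is simpler and self-contained once Theorem~B is available.

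A small nit: upper semicontinuity only gives $M_r<+\infty$. The bound $M_r>-\infty$ comes from $u\not\equiv-\infty$, which you have already assumed.
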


This is a pointwise version of the VMO characterization obtained by Biard and
Wu \cite{BiardWu}. As shown in Section~\ref{sec:mean-oscillation}, the forward
implication follows directly from Theorem~B, whereas the converse follows from
the Chern--Levine--Nirenberg inequality. 
The idea here provides the key of a streamlined proof of the main result in  \cite{BiardWu},
a global VMO characterization for plurisubharmonic functions with zero Lelong number.

For the nonlinear characterization, let \(\Omega\Subset\mc^n\) be bounded and
hyperconvex. For a Borel set \(E\subset\Omega\), recall that its capacity
relative to \(\Omega\) is
\[
    \Cap_\Omega(E)
    :=\sup\left\{\int_E(\ddc\varphi)^n:
    \varphi\in\PSH(\Omega),\ -1\leq\varphi\leq0\right\},
\]
and let \(h_{E,\Omega}\) denote the relative extremal function of \(E\) in
\(\Omega\).
Given \(0<\theta<1\), set
\[
    E_{r,t}:=B(r)\cap\{M_{2r}-u>t\},
    \qquad
    \Gamma_r^\theta(E):=-\sup_{B(\theta r)}h_{E,B(2r)}.
\]

\begin{theoremD}
Let \(u\in\PSH^-(B(1))\), and fix \(0<\theta<1\). The following are
equivalent:
\begin{enumerate}[(i)]
    \item \(\nu(u,0)=0\);
    \item for every \(t>0\),
    \[
            \Gamma_r^\theta(E_{r,t})\to0
            \quad\text{as }r\to0^+;
    \]
    \item for every \(t>0\),
    \[
            \Cap_{B(2r)}(E_{r,t})\to0
            \quad\text{as }r\to0^+.
    \]
\end{enumerate}
\end{theoremD}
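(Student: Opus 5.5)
The plan is to prove (i)$\Rightarrow$(iii)$\Rightarrow$(ii)$\Rightarrow$(i). The first step uses Theorem~B, the second is a comparison between capacity and relative extremal depth, and the third uses logarithmic convexity of $M_r$ through a competitor built from $u$ itself.

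\emph{(i)$\Rightarrow$(iii).} Fix $t>0$ and set
\[
\varphi:=\max\bigl(-1,(u-M_{2r})/t\bigr).
\]
Then $\varphi\in\PSH(B(2r))$, $-1\le\varphi\le 0$, and $\varphi=-1$ on $E_{r,t}$. Hence $h_{E_{r,t},B(2r)}\ge\varphi$. The standard Chern--Levine--Nirenberg type bound is
\[
\Cap_{B(2r)}(E)\le\int_{B(r)}|w|\,(\ddc\,\cdot)^n\lesssim \|w\|_{L^1(B(3r/2))}/r^{2n},
\]
and it is scale invariant after dilating to the unit ball. Combined with Chebyshev, it gives
\[
\Cap_{B(2r)}(E_{r,t})\le \frac{C}{t\,|B(2r)|}\int_{B(2r)}(M_{2r}-u)\,dV=\frac{C}{t}\bigl(M_{2r}-V_{2r}\bigr).
\]
More concretely, I would use $\Cap(\{v<-s\})\le C\|v\|_{L^1}/s$ on the dilated ball for $v=u-M_{2r}\le0$, which follows from the comparison principle. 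By Theorem~B, $M_{2r}-V_{2r}\to0$, so the right-hand side tends to $0$.

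\emph{(iii)$\Rightarrow$(ii).} Dilate to the fixed pair $(B(1/2),B(1))$. There is a classical comparison $\Cap_{B(1)}(E)\ge c_\theta\,\bigl(\sup_{B(\theta/2)}(-h_E)\bigr)^n$, up to normalization. Alternatively, one can use that $h_E\ge -\Cap(E)^{1/n}\cdot C$ on compact subsets, which comes from comparing $h_E$ with a multiple of $\max(\log|z|,\dots)$ via the domination principle and the mass $\int(\ddc h_E)^n=\Cap(E)$. Either way, small capacity forces small depth, and both quantities are dilation invariant.

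\emph{(ii)$\Rightarrow$(i).} This is the main step. Suppose $\nu:=\nu(u,0)>0$. By logarithmic convexity, $M_{2r}-M_{s}\ge \nu\log(2r/s)$ for $s<2r$. Choose $s=\delta r$ with $\nu\log(2/\delta)>2t$, where $\delta$ is small depending only on $\nu$ and $t$. Then $B(\delta r)\subset E_{r,t}$ with room to spare. Since $h_{E,B(2r)}$ is decreasing in $E$, we get $h_{E_{r,t}}\le h_{B(\delta r),B(2r)}$, and the latter is explicitly
\[
\max\Bigl(\frac{\log(|z|/2r)}{\log(2/\delta)},-1\Bigr).
\]
On $B(\theta r)$ this equals at most $\log(\theta/2)/\log(2/\delta)$, a negative constant independent of $r$. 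Hence $\Gamma_r^\theta\ge c>0$ for small $r$, which contradicts (ii). The same inclusion gives $\Cap\ge\Cap_{B(2)}(B(\delta))>0$, which directly contradicts (iii) as well.

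The delicate point I expect is the quantitative capacity–$L^1$ estimate in (i)$\Rightarrow$(iii), together with the depth–capacity comparison. The first should follow from $\Cap(\{v<-s\})\le s^{-1}\sup_\varphi\int(-v)(\ddc\varphi)^n$ and CLN on the interior ball $B(r)$. Since $E_{r,t}\subset B(r)$ lies well inside $B(2r)$, that estimate applies.
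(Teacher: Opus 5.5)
Your arguments for (i)$\Rightarrow$(iii) and for ``$\nu(u,0)>0$ contradicts both (ii) and (iii)'' are sound. The gap is in (iii)$\Rightarrow$(ii), which your cycle needs: both inequalities you quote there are false as stated.

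The quantity $\sup_{B(\theta/2)}(-h_E)=-\inf_{B(\theta/2)}h_E$ is the \emph{maximal} depth. A pointwise bound $h_E\ge -C\Cap(E)^{1/n}$ on a compact set also fails at every interior point of $E$, where $h_E=-1$. For instance, take $E=\overline{B(\epsilon)}$ with $\epsilon<\theta/2$. Then $h_{E,B(1)}=\max\bigl(\log|z|/\log(1/\epsilon),-1\bigr)$, so $\sup_{B(\theta/2)}(-h_E)=1$, while $\Cap_{B(1)}(E)$ is a constant multiple of $(\log(1/\epsilon))^{-n}\to0$.

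What $\Gamma_r^\theta$ measures is $-\sup h_E=\inf(-h_E)$, the \emph{minimal} depth over the central ball. The correct comparison runs as follows. If $\delta:=-\sup_{B(\theta)}h_{F,B(2)}>0$, then $h_{F,B(2)}/\delta$ is a competitor for $G:=h_{B(\theta),B(2)}$, so $h_{F,B(2)}\le\delta G$. Both sides vanish on $\partial B(2)$, so the comparison principle gives $\Cap_{B(2)}(F)\ge\delta^n\Cap_{B(2)}(B(\theta))$. This is the first half of the paper's Lemma~\ref{lem:depth-capacity}. Your ``domination principle'' sketch points there, but as written the step does not go through.

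There is also a cheaper repair that removes the need for any depth--capacity comparison. The competitor $\varphi=\max\bigl(-1,(u-M_{2r})/t\bigr)$, which you wrote down but never used, gives $\sup_{B(\theta r)}h_{E_{r,t},B(2r)}\ge\max\bigl(-1,(M_{\theta r}-M_{2r})/t\bigr)$. Hence $\Gamma_r^\theta(E_{r,t})\le(M_{2r}-M_{\theta r})/t\to0$ by Theorem~B; this is exactly the paper's proof of (i)$\Rightarrow$(ii). Combined with your (i)$\Rightarrow$(iii) and your ball-inclusion contrapositive, all three equivalences follow.

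Once repaired this way, your route differs usefully from the paper's. The paper gets (ii)$\Leftrightarrow$(iii) from a lemma valid for arbitrary Borel sets $F_j\subset B(1)$. Its harder direction normalizes $h_{F_j}/m_j$ and uses Hartogs compactness to obtain a uniform $L^1$ bound before applying the CLN capacity estimate. You instead apply that CLN--Chebyshev estimate directly to $u-M_{2r}$, which yields the explicit rate $\Cap_{B(2r)}(E_{r,t})\le C(M_{2r}-V_{2r})/t$ with no compactness argument. The paper's detour buys a general equivalence between the two notions of smallness; yours is shorter and quantitative but tied to the sets $E_{r,t}$.

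Your contrapositive via the explicit $h_{B(\delta r),B(2r)}$ works for every $t>0$, whereas the paper picks $t_0$ depending on $\nu$ so that $B(\theta r)\subset E_{r,t_0}$. That difference is cosmetic.
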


Theorem~D may be viewed as a nonlinear counterpart of the mean-value criteria
in Theorem~B. For a fixed depth \(t>0\), the set \(E_{r,t}\) consists of the
points in \(B(r)\) at which \(u\) lies more than \(t\) below its radial maximum
on \(B(2r)\). The quantity \(\Gamma_r^\theta(E_{r,t})\) measures the influence
of this exceptional set on the central ball through its relative extremal
function, whereas \(\Cap_{B(2r)}(E_{r,t})\) measures its pluripotential size at
the natural scale. After rescaling \(B(2r)\) to \(B(2)\), both quantities
remain unchanged, so the resulting criterion is genuinely local and scale
invariant. Thus Theorem~D asserts that the Lelong number vanishes precisely
when every fixed-depth sublevel set becomes negligible, in either of these
equivalent pluripotential senses, as the scale shrinks. This is significant
because Bedford--Taylor capacity is naturally adapted to the complex
Monge--Amp\`ere operator and can capture the concentration of singularities
more effectively than ordinary volume or radial averages. Theorem~D therefore
provides a bridge between the first-order invariant \(\nu(u,0)\) and the
nonlinear pluripotential geometry of the sublevel sets of \(u\). 
Related topics will be explored further in forthcoming works.

Theorems A and B are proved in Section~2. The local mean-oscillation theorem
is proved in Section~3, and the relative-extremal and capacity
characterizations are proved in Section~4.

\textbf{Acknowledgements.}
This research is supported by National Key R\&D Program of China ( No.
2021YFA1003100), NSFC grants (No. 12471079, No. 12525104), and the
Fundamental Research Funds for the Central Universities.

\section{Linear characterizations}

\begin{lemma}\label{lem:log-convexity}
Let \(u\in\PSH(\Omega)\). In the range where the following quantities are
defined, the functions
\[
        r\longmapsto S(u,r),\qquad
        r\longmapsto V(u,r),\qquad
        r\longmapsto M(u,r)
\]
are increasing and convex as functions of \(\log r\).
\end{lemma}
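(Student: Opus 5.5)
We would reduce all three statements to the behaviour of \(u\) on complex lines through the origin, via the unitary group, and then pass from \(S\) to \(V\) by integrating in the radius. Assume \(\overline{B(R)}\subset\Omega\) and work on \(0<r<R\).

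\emph{Spherical means.} For \(\zeta\in\partial B(1)\), the function \(v_\zeta(\lambda):=u(\lambda\zeta)\) is subharmonic on \(\{|\lambda|<R\}\subset\mc\). Its circular mean
\[
c_\zeta(r):=\frac1{2\pi}\int_0^{2\pi}v_\zeta(re^{i\theta})\,d\theta
\]
is increasing and convex in \(\log r\); this is the classical one-variable fact, obtained from the Riesz decomposition or from the harmonic majorant on annuli. Since \(\sigma\) is invariant under the circle action \(\zeta\mapsto e^{i\theta}\zeta\), we get
\[
S(u,r)=\frac{1}{\sigma(\partial B(1))}\int_{\partial B(1)} c_\zeta(r)\,d\sigma(\zeta).
\]
An average of increasing functions convex in \(\log r\) is again increasing and convex in \(\log r\). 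Integrability is not an issue: \(u\) is bounded above on compacts and \(S(u,r)>-\infty\) for \(r>0\), so the \(c_\zeta\) are integrable and Fubini applies.

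\emph{Radial maxima.} We would avoid the lines argument here and use the Hadamard three-circles argument directly. Fix \(r_1<r_2\), and let \(h(z)=a+b\log|z|\) be the unique function of this form with \(h=M_{r_j}\) at \(|z|=r_j\). The function \(\log|z|\) is plurisubharmonic and maximal off the origin. Since \(u\le h\) on both boundary spheres, the maximum principle for the subharmonic restrictions \(u(\lambda\zeta)-h(\lambda\zeta)\) on annuli in each complex line gives \(u\le h\) on the shell. Taking the supremum over \(|z|=r\) gives \(M_r\le a+b\log r\), which is convexity; monotonicity is immediate from the definition. Note \(M_r=\max_{|z|=r}u\) by the maximum principle.

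\emph{Solid means.} In polar coordinates,
\[
V(u,r)=\int_0^1 S(u,rt)\,2n\,t^{2n-1}\,dt.
\]
For each fixed \(t\), the function \(\log r\mapsto S(u,rt)\) is a translate of an increasing convex function. A positive-weight average of these is increasing and convex in \(\log r\).

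\emph{Main obstacle.} The only delicate point is the one-variable log-convexity of circular means when \(v_\zeta\equiv-\infty\) on some lines, and the justification of Fubini. We would handle it by first proving everything for smooth plurisubharmonic regularizations \(u*\rho_\varepsilon\), which decrease to \(u\). For these, \(S\) and \(V\) are \(C^2\), and we have the direct formula
\[
\frac{d}{d\log r}S=\frac{r^{2-2n}}{\sigma_{2n-1}}\int_{B(r)}\Delta u\,dV\ge0,
\]
whose right-hand side is increasing in \(r\), giving monotonicity and convexity. By monotone convergence, \(S\), \(V\) and \(M\) of the regularizations decrease to those of \(u\); for \(M\) this uses upper semicontinuity and Dini-type arguments on compact spheres. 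Pointwise decreasing limits of increasing convex functions remain increasing and convex, provided they are finite. Finiteness holds for \(S\) and \(V\) because \(u\not\equiv-\infty\), and for \(M\) trivially.
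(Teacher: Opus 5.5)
Your proposal is correct and follows the paper's proof: slicing along complex lines and averaging the log-convex circular means for \(S\), the three-circles argument for \(M\) (which you spell out, whereas the paper only cites it), and the formula \(V(u,r)=2n\int_0^1S(u,tr)t^{2n-1}\,dt\), which exhibits \(V\) as a positive average of log-translates of \(S\). The regularization fallback is unnecessary, because your Tonelli remark already gives \(u(\lambda\zeta)\not\equiv-\infty\) for \(\sigma\)-a.e.\ \(\zeta\). Also, for \(n\ge2\) the monotonicity of \(r^{2-2n}\int_{B(r)}\Delta u\,dV\) does not follow from \(\Delta u\ge0\) alone (it fails for \(-|x|^{-2}\) on \(\mr^4\)); it is Lelong's monotonicity, which for smooth \(u\) is equivalent to the log-convexity being proved, so it cannot be read off from your derivative formula.
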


\begin{proof}
We first consider the spherical mean. For each
\(\zeta\in\partial B(1)\), the function \(\tau\mapsto u(\tau\zeta)\) is
subharmonic in one complex variable. Its circular mean is therefore increasing
and convex as a function of \(\log|\tau|\). Averaging over
\(\zeta\in\partial B(1)\), and using the invariance of spherical measure under
scalar rotations, gives the assertion for \(S(u,r)\).

The identity
\[
        V(u,r)=2n\int_0^1 S(u,tr)t^{2n-1}\,dt
\]
shows that \(V(u,e^x)\) is a positive average of translates of
\(S(u,e^x)\); hence it is also increasing and convex in \(x=\log r\). The
corresponding assertion for \(M(u,r)\) is the standard three-circles theorem
for the radial maximum of a plurisubharmonic function.
\end{proof}

\begin{lemma}\label{lem:convex-quotient}
Let \(f:(-\infty,a)\to\mr\) be increasing and convex. If
\[
        \lim_{x\to-\infty}\frac{f(x)}{x}=0,
\]
then, for every fixed \(c>0\),
\[
        f(x+c)-f(x)\to0,
        \qquad x\to-\infty.
\]
Conversely, if \(f(x+c)-f(x)\to0\) for some \(c>0\), then
\[
        \lim_{x\to-\infty}\frac{f(x)}{x}=0.
\]
\end{lemma}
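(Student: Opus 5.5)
The argument rests on the monotonicity of difference quotients of a convex function. Put
\[
  g_c(x):=\frac{f(x+c)-f(x)}{c},
\]
defined for \(x<a-c\). Since \(f\) is increasing, \(g_c\ge0\). Since \(f\) is convex, \(g_c\) is nondecreasing in \(x\). The plan is to bound \(g_c\) from above by the slope of a long chord for the forward direction, and from below by a sum of increments for the converse.

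\emph{Forward direction.} Fix \(c>0\) and a base point \(x_0<a-c\). For \(y<x<x_0\) with \(x\le x_0-c\), the three-chord inequality for convex functions gives
\[
  0\le \frac{f(x+c)-f(x)}{c}\le \frac{f(x_0)-f(x)}{x_0-x}.
\]
Letting \(x\to-\infty\) with \(x_0\) fixed, the right-hand side is
\[
  \frac{f(x_0)/x-f(x)/x}{x_0/x-1}\longrightarrow \frac{0-0}{0-1}=0,
\]
because \(f(x)/x\to0\) by hypothesis and \(f(x_0)/x\to0\) trivially. Hence \(f(x+c)-f(x)\to0\).

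\emph{Converse.} Assume \(\delta(x):=f(x+c)-f(x)\to0\) for some \(c>0\). Given \(\varepsilon>0\), choose \(x_1\) such that \(0\le\delta\le\varepsilon\) on \((-\infty,x_1]\). For \(x<x_1\), write \(x_1-x=kc+s\) with \(k\in\mathbb{N}\) and \(0\le s<c\). Telescoping along the points \(x,x+c,\dots,x+kc\) gives
\[
  0\le f(x+kc)-f(x)\le k\varepsilon.
\]
By monotonicity, \(f(x_1-c)\le f(x+kc)\le f(x_1)\), so \(f(x)\) is bounded below by \(f(x_1-c)-k\varepsilon\) and above by \(f(x_1)\). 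Since \(x<0\) for large \(|x|\), dividing by \(x\) reverses the inequalities:
\[
  \frac{f(x_1)}{x}\le\frac{f(x)}{x}\le \frac{f(x_1-c)-k\varepsilon}{x}.
\]
Use \(k\le (x_1-x)/c\le |x|/c+O(1)\) and \(f(x)\ge f(x_1-c)-k\varepsilon\). This yields
\[
  \limsup_{x\to-\infty}\left|\frac{f(x)}{x}\right|\le\frac{\varepsilon}{c}.
\]
Letting \(\varepsilon\to0\) gives \(f(x)/x\to0\).

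Only monotonicity is needed in the converse, since convexity is not used there. The main point needing care is the sign bookkeeping when dividing by the negative number \(x\). Note also that \(f\) increasing means \(f\) is bounded above near \(-\infty\), while \(f(x)\) may tend to \(-\infty\); this is why only the lower bound on \(f\) requires the telescoping estimate.
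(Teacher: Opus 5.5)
Your proof is correct. The converse is the paper's telescoping argument: the paper phrases it as a Ces\`aro mean of increments over a decomposition of $[x,x_0]$, and you make the leftover interval of length $<c$ explicit and control it by monotonicity.

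For the forward direction you take a slightly different route. The paper uses convexity only to say that $g_c$ is nondecreasing and nonnegative, so it has a limit $L\ge0$ at $-\infty$. It then argues by contradiction: $L>0$ would give $f(x_0)-f(x_0-kc)\ge kcL$, forcing $f(x)/x\ge L-o(1)$ along $x=x_0-kc$. You instead apply the three-chord inequality, comparing the chord on $[x,x+c]$ with the chord on $[x,x_0]$. This gives the direct bound $0\le g_c(x)\le (f(x_0)-f(x))/(x_0-x)\to0$.

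Your version needs neither the contradiction nor the existence of $\lim g_c$, and it yields an explicit rate. The paper's version runs both directions through the same telescoping mechanism. The stray ``$y<x$'' in your forward step is unused and should be deleted.
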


\begin{proof}
For fixed \(c>0\), convexity implies that
\[
        x\longmapsto \frac{f(x+c)-f(x)}{c}
\]
is nondecreasing. Since \(f\) is increasing, this quotient is nonnegative.
If its limit at \(-\infty\) were positive, summing the increments over
successive intervals of length \(c\) would contradict
\(f(x)/x\to0\). This proves the first implication.

Conversely, fix \(x_0<a\) and decompose \([x,x_0]\), up to one interval of
length less than \(c\), into intervals of length \(c\). If
\(f(y+c)-f(y)\to0\) as \(y\to-\infty\), the Ces\`aro mean of these increments
tends to zero. Dividing the resulting telescoping sum by \(x_0-x\) gives
\(f(x)/x\to0\).
\end{proof}

\begin{proposition}\label{prop:mean-lelong}
For \(u\in\PSH(\Omega)\),
\[
        \nu(u,0)
        =\lim_{r\to0^+}\frac{S_r}{\log r}
        =\lim_{r\to0^+}\frac{V_r}{\log r}
        =\lim_{r\to0^+}\frac{M_r}{\log r}.
\]
These are the standard mean-value descriptions of the Lelong number; see \cite{Kiselman,DemaillyBook,GZ}.
\end{proposition}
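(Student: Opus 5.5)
The first equality is the definition of \(\nu(u,0)\), so we have to show that \(V_r/\log r\) and \(M_r/\log r\) converge to the same limit. By Lemma~\ref{lem:log-convexity}, each of \(s(x):=S(u,e^x)\), \(v(x):=V(u,e^x)\), \(m(x):=M(u,e^x)\) is increasing and convex in \(x=\log r\). Hence each has a finite right-derivative slope at \(-\infty\), namely \(\lim_{x\to-\infty}f(x)/x=\lim_{x\to-\infty}f'(x^+)\in[0,\infty)\). So all three limits exist, and it remains to identify them.

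To compare the solid and spherical means, use the sub-mean value property: \(V_r\le S_r\), since \(V_r\) is an average of \(S_{tr}\) over \(t\in(0,1)\) and \(S\) is increasing. For the reverse inequality, the formula \(V(u,r)=2n\int_0^1 S(u,tr)t^{2n-1}\,dt\) restricted to \(t\in[1/2,1]\) gives
\[
V_r\ \ge\ (1-2^{-2n})S_{r/2}+2n\int_0^{1/2}S_{tr}t^{2n-1}\,dt .
\]
The second term is where care is needed, because \(S_{tr}\) may tend to \(-\infty\) as \(t\to0\). Write \(S_{tr}\ge S_r+\nu'\log t\), where \(\nu'\) is the slope of \(s\) at \(\log r\). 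This bound follows from convexity for \(t\le1\), and \(\nu'\) is bounded as \(r\to0\) by monotonicity of the slope. Since \(\int_0^1|\log t|\,t^{2n-1}\,dt<\infty\), we obtain \(V_r\ge S_r-C\) for some constant \(C\) and all small \(r\). Dividing by \(\log r<0\), this yields \(\lim V_r/\log r=\lim S_r/\log r\).

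For the radial maximum, \(S_r\le M_r\) is immediate. For the other direction, apply the Harnack inequality to \(M_{2r}-u\ge0\), which is superharmonic on \(B(2r)\); equivalently, use the Poisson kernel on \(\partial B(2r)\). This gives, for \(|z|\le r\),
\[
u(z)\le c_n S_{2r}+(1-c_n)M_{2r},
\]
with \(c_n=(1-1/2)/(1+1/2)^{2n-1}>0\) depending only on \(n\). Taking the supremum over \(z\in B(r)\), we get \(M_r-M_{2r}\le c_n(S_{2r}-M_{2r})\). Dividing by \(\log r\), and using that the slope limits of \(m\) at \(\log r\) and \(\log 2r\) coincide, we get \(\lim M_r/\log r\le\lim S_r/\log r\) (the inequality reverses in the right way after dividing by a negative number). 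Combined with \(S_r\le M_r\), this gives equality.

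The main obstacle is the careful handling of signs when dividing by the negative quantity \(\log r\), together with the integrability near \(t=0\) in the comparison of \(V\) and \(S\). Both are settled by the convexity bound \(S_{tr}\ge S_r+\nu'\log t\) and the uniform Harnack constant \(c_n\). Since all of this is classical, the proof may also simply cite \cite{Kiselman,DemaillyBook,GZ}.
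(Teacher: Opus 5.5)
Your argument is correct in substance, but the last step states the wrong inequality. Dividing \(M_r-M_{2r}\le c_n(S_{2r}-M_{2r})\) by \(\log r<0\) and letting \(r\to0^+\) gives \(0\ge c_n\bigl(\nu(u,0)-m\bigr)\), where \(m=\lim M_r/\log r\). So the Harnack step yields \(\lim S_r/\log r\le\lim M_r/\log r\). The inequality you wrote, \(\lim M_r/\log r\le\lim S_r/\log r\), is the one that comes from \(S_r\le M_r\). As written, both of your inequalities point the same way, and equality does not follow.

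With the direction corrected, the argument closes. A cleaner form is that the Harnack bound plus convexity gives \(0\le M_{2r}-S_{2r}\le c_n^{-1}(M_{2r}-M_r)\le C\) for small \(r\). Separately, splitting the integral at \(t=1/2\) is unnecessary: the bound \(S_{tr}\ge S_r+\nu'\log t\) on all of \((0,1]\) gives \(V_r\ge S_r-\nu'/(2n)\) directly.

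Your route differs from the paper's. The paper normalizes \(u\le0\) and makes one elementary comparison between the two extreme quantities. For \(|z|\le r<R\), the inclusion \(B(0,R-r)\subset B(z,R)\) and the sub-mean inequality over balls give \(M_r\le(1-r/R)^{2n}V_{R-r}\). Taking \(r=sR\), dividing by \(\log R\), and letting first \(R\to0\) and then \(s\to0\) yields \(\lim M_r/\log r\ge\lim V_r/\log r\). The chain \(V_r\le S_r\le M_r\) then squeezes the spherical mean.

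You instead compare each of \(V\) and \(M\) with \(S\) separately. For \(V\) you use the radial integral formula with convexity, essentially the computation behind Theorem~A. For \(M\) you use the Harnack/Poisson-kernel estimate, the same one the paper uses later to get \(M_r-S_r\to0\) in Theorem~B. The paper's proof needs less machinery. Yours gives a stronger conclusion: the gaps \(S_r-V_r\) and \(M_r-S_r\) stay bounded as \(r\to0^+\), not merely \(o(|\log r|)\), and no normalization of \(u\) is needed.
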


\begin{proof}
We may assume \(u\leq0\) near the origin. The submean property gives
\[
        V_r\leq S_r\leq M_r.
\]
It remains to compare \(M_r\) and \(V_r\). Let \(0<r<R\). If \(|z|\le r\), then
\[
        B(0,R-r)\subset B(z,R).
\]
Since \(u\le0\), the submean inequality gives
\[
        u(z)
        \leq
        \frac{1}{|B(R)|}\int_{B(z,R)}u\,dV
        \leq
        \frac{1}{|B(R)|}\int_{B(0,R-r)}u\,dV.
\]
Thus
\[
        M_r\leq\left(1-\frac rR\right)^{2n}V_{R-r}.
\]
Let \(m\) and \(v\) denote the limits of \(M_r/\log r\) and
\(V_r/\log r\), respectively; their existence follows from
Lemma~\ref{lem:log-convexity}. Since \(\log r<0\), the preceding mean-value
inequalities give
\[
        v\geq \nu(u,0)\geq m.
\]
For fixed \(0<s<1\), put \(r=sR\) in the last displayed estimate and divide
by \(\log R<0\). Letting \(R\to0^+\) yields
\[
        m\geq (1-s)^{2n}v.
\]
Finally, letting \(s\to0^+\) gives \(m\geq v\), and hence
\(m=v=\nu(u,0)\).
\end{proof}

\begin{theorem}[Theorem~A]\label{thm:SV}
Let \(u\in\PSH\) near the origin. Then
\[
        \lim_{r\to0^+}\bigl(S(u,r)-V(u,r)\bigr)
        =\frac{1}{2n}\nu(u,0).
\]
Consequently,
\[
        \nu(u,0)=0
        \quad\Longleftrightarrow\quad
        S(u,r)-V(u,r)\to0.
\]
\end{theorem}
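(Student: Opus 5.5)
Start from the identity in the proof of Lemma~\ref{lem:log-convexity},
\[
        V(u,r)=2n\int_0^1 S(u,tr)\,t^{2n-1}\,dt ,
\]
and rewrite the gap as
\[
        S(u,r)-V(u,r)=2n\int_0^1\bigl(S(u,r)-S(u,tr)\bigr)t^{2n-1}\,dt,
\]
using \(2n\int_0^1 t^{2n-1}\,dt=1\). Substituting \(t=e^{-s}\), with \(s\in(0,\infty)\), and setting \(f(x):=S(u,e^x)\) turns this into
\[
        S_r-V_r=2n\int_0^\infty\bigl(f(x)-f(x-s)\bigr)e^{-2ns}\,ds,\qquad x=\log r .
\]
By Lemma~\ref{lem:log-convexity}, \(f\) is increasing and convex. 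By Proposition~\ref{prop:mean-lelong}, \(f(x)/x\to\nu:=\nu(u,0)\) as \(x\to-\infty\). Since the difference quotients of a convex function are monotone, \(f'(x^\pm)\) decreases to some limit \(\ell\ge0\) as \(x\to-\infty\). A convex function with asymptotic slope \(f(x)/x\to\nu\) must have \(\ell=\nu\); this is the usual Cesàro argument already used in Lemma~\ref{lem:convex-quotient}.

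The key step is to show that, for each fixed \(s>0\),
\[
        \frac{f(x)-f(x-s)}{s}\;\longrightarrow\;\nu\quad (x\to-\infty).
\]
For the lower bound, convexity gives \(\frac{f(x)-f(x-s)}{s}\ge f'(x-s)^{+}\ge \ell\), since the one-sided derivatives are nondecreasing in \(x\) and \(\ell\) is their infimum. For the upper bound, \(\frac{f(x)-f(x-s)}{s}\le f'(x^-)\to\ell\). Hence the increments converge pointwise to \(\nu s\).

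To pass to the limit under the integral we need a dominating function. Fix \(x_0<\log r_0\) inside the domain. For \(x\le x_0\) and \(s>0\), the estimate
\[
        0\le f(x)-f(x-s)\le s\,f'(x_0^-)
\]
gives the integrable majorant \(2n\,s\,f'(x_0^-)e^{-2ns}\). Dominated convergence then yields
\[
        \lim_{r\to0^+}(S_r-V_r)=2n\,\nu\int_0^\infty s\,e^{-2ns}\,ds=2n\,\nu\cdot\frac{1}{(2n)^2}=\frac{\nu}{2n}.
\]
The equivalence \(\nu(u,0)=0\Leftrightarrow S_r-V_r\to0\) is then immediate.

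The main obstacle is the slope identification \(\ell=\nu\), together with justifying the domination uniformly in \(x\). The integral runs over all \(s\in(0,\infty)\), so we must use that \(f\) is defined on the entire half-line \((-\infty,\log r_0)\) to the left of \(x\). This holds because \(x-s\) always stays in that half-line, so no boundary issue arises. The identification \(\ell=\nu\) follows from
\[
        f(x)=f(x_0)-\int_x^{x_0}f'\,dy ,
\]
so that \(f(x)/x\) tends to the limit of \(f'\) at \(-\infty\). If \(u\equiv-\infty\) near \(0\) the statement is degenerate, so we assume \(u\not\equiv-\infty\), which makes \(f\) finite.
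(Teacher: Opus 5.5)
Your proposal is correct and follows essentially the same route as the paper: the polar-coordinates identity for $V(u,r)$, the substitution $t=e^{-s}$, pointwise convergence of $f(x)-f(x-s)$ to $\nu s$ by convexity, and dominated convergence against $\int_0^\infty s e^{-2ns}\,ds$. You also make explicit two points the paper only asserts, namely the identification of the limiting one-sided slope $\ell$ with $\nu(u,0)$ and the concrete majorant $2n\,s\,f'(x_0^-)e^{-2ns}$.
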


\begin{proof}
Set \(\phi(x):=S(u,e^x)\), where \(x=\log r\). By Lemma \ref{lem:log-convexity}, \(\phi\) is convex on an interval \((-\infty,x_0)\), and by Proposition \ref{prop:mean-lelong},
\[
        \lim_{x\to-\infty}\frac{\phi(x)}{x}=\nu(u,0).
\]
The solid mean can be written in terms of the spherical mean as
\[
        V(u,r)=2n\int_0^1 S(u,tr)t^{2n-1}\,dt.
\]
With \(t=e^{-\tau}\), this becomes
\[
        V(u,e^x)=2n\int_0^\infty \phi(x-\tau)e^{-2n\tau}\,d\tau.
\]
Consequently,
\[
        S(u,e^x)-V(u,e^x)
        =2n\int_0^\infty
        \bigl(\phi(x)-\phi(x-\tau)\bigr)e^{-2n\tau}\,d\tau.
\]
For each fixed \(\tau>0\), convexity gives
\[
        \phi(x)-\phi(x-\tau)\to \nu(u,0)\tau,
        \qquad x\to-\infty.
\]
Moreover, since the asymptotic slope is finite, the integrand is dominated by a constant multiple of \(\tau e^{-2n\tau}\) for \(x\) sufficiently negative. Hence dominated convergence yields
\[
\begin{aligned}
        \lim_{x\to-\infty}\bigl(S(u,e^x)-V(u,e^x)\bigr)
        &=2n\nu(u,0)\int_0^\infty \tau e^{-2n\tau}\,d\tau  \\
        &=\frac{1}{2n}\nu(u,0).
\end{aligned}
\]
The equivalence follows immediately.
\end{proof}

\begin{theorem}[Theorem~B, parts \textup{(i)}--\textup{(ii)}]
\label{thm:scale-increment}
Let \(u\in\PSH\) near the origin. The following conditions are equivalent,
where all limits are taken as \(r\to0^+\):
\begin{enumerate}[(i)]
    \item \(\nu(u,0)=0\);
    \item for every \(A>1\),
    \[
            M(u,Ar)-M(u,r)\to0;
    \]
    \item for some \(A>1\),
    \[
            M(u,Ar)-M(u,r)\to0.
    \]
\end{enumerate}
The same equivalence holds with \(M\) replaced by \(S\) or \(V\).
\end{theorem}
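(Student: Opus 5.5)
The plan is to reduce everything to the one-variable statement of Lemma~\ref{lem:convex-quotient}, applied to each of the three radial quantities. Fix \(Q\in\{M,S,V\}\) and set \(f(x):=Q(u,e^x)\) on an interval \((-\infty,a)\) where \(u\) is defined on \(B(e^a)\). By Lemma~\ref{lem:log-convexity}, \(f\) is increasing and convex. By Proposition~\ref{prop:mean-lelong}, \(f(x)/x\to\nu(u,0)\) as \(x\to-\infty\). With \(c=\log A>0\) and \(x=\log r\), the increment \(Q(u,Ar)-Q(u,r)\) is exactly \(f(x+c)-f(x)\), and \(r\to0^+\) corresponds to \(x\to-\infty\).

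For (i)\(\Rightarrow\)(ii), assume \(\nu(u,0)=0\). Then \(f(x)/x\to0\), and the first half of Lemma~\ref{lem:convex-quotient} gives \(f(x+c)-f(x)\to0\) for every \(c>0\), that is, for every \(A>1\). The implication (ii)\(\Rightarrow\)(iii) is trivial.

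For (iii)\(\Rightarrow\)(i), suppose \(f(x+c)-f(x)\to0\) for one \(c=\log A\). The converse half of Lemma~\ref{lem:convex-quotient} yields \(f(x)/x\to0\). Combined with Proposition~\ref{prop:mean-lelong}, this forces \(\nu(u,0)=0\).

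Since the argument uses only the monotonicity, the convexity in \(\log r\), and the slope identification, and all three hold for \(M\), \(S\) and \(V\), the same equivalence holds with \(M\) replaced by \(S\) or \(V\). Moreover, because all three quantities share the same slope \(\nu(u,0)\), a vanishing increment for any one \(Q\) forces \(\nu(u,0)=0\), which in turn gives vanishing increments for every \(Q\) and every \(A\).

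The only step I expect to need care is the domain and sign bookkeeping in Lemma~\ref{lem:convex-quotient}: the lemma is stated on \((-\infty,a)\), and we must check that \(f\) is finite there. If \(u\not\equiv-\infty\) near \(0\), the quantities \(S\), \(V\) and \(M\) are finite for small \(r>0\), so \(f\) is a genuine real-valued convex function. The case \(u\equiv-\infty\) is excluded by the convention that plurisubharmonic functions are not identically \(-\infty\). Beyond this check, no genuine obstacle arises; the proof is a direct combination of the three earlier results.
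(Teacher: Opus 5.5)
Your proposal is correct and follows essentially the same route as the paper: set \(f(x)=Q(u,e^x)\), obtain monotonicity and convexity from Lemma~\ref{lem:log-convexity} and the slope \(\nu(u,0)\) from Proposition~\ref{prop:mean-lelong}, and apply both directions of Lemma~\ref{lem:convex-quotient} with \(c=\log A\). Your finiteness check on \(f\) is a harmless detail that the paper leaves implicit.
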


\begin{proof}
Put \(f(x)=M(u,e^x)\). By Proposition \ref{prop:mean-lelong},
\[
        \nu(u,0)=0
        \quad\Longleftrightarrow\quad
        \lim_{x\to-\infty}\frac{f(x)}{x}=0.
\]
Since
\[
        M(u,Ar)-M(u,r)=f(x+\log A)-f(x),
\]
the assertion follows from Lemma \ref{lem:convex-quotient}. The proofs for \(S\) and \(V\) are identical.
\end{proof}

\begin{theorem}[Theorem~B, parts \textup{(i)} and \textup{(iii)}]
\label{thm:agreement}
Let \(u\in\PSH\) near the origin. If \(\nu(u,0)=0\), then, as
\(r\to0^+\),
\[
        M_r-S_r\to0,
        \qquad
        S_r-V_r\to0,
        \qquad
        S_r-u_r(0)\to0.
\]
Hence
\[
        M_r-V_r\to0,
        \qquad
        M_r-u_r(0)\to0.
\]
Conversely, if any one of the four nonnegative gaps
\[
        S_r-V_r,\qquad S_r-u_r(0),\qquad
        M_r-V_r,\qquad M_r-u_r(0)
\]
tends to zero, then \(\nu(u,0)=0\).
\end{theorem}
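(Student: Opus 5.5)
We prove the forward direction first and then the converse by reducing every gap to a fixed-scale increment. Throughout, write \(\phi(x)=S(u,e^x)\) and \(f(x)=M(u,e^x)\). Both are increasing and convex by Lemma~\ref{lem:log-convexity}.

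\emph{Forward direction.} Assume \(\nu(u,0)=0\). Then Theorem~\ref{thm:SV} gives \(S_r-V_r\to0\) directly.

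For \(M_r-S_r\), we use the Harnack-type comparison from the proof of Proposition~\ref{prop:mean-lelong}, which gives an upper bound of \(M\) by a solid mean at a larger radius. After subtracting a constant we may assume \(u\le0\) on \(B(4r_0)\). Taking \(R=2r\) there yields
\[
M_r\le 2^{-2n}V_{r}\le 2^{-2n}S_r .
\]
This does not give a difference estimate, since the factor \(2^{-2n}\) is not \(1\). We therefore proceed differently and use the Poisson kernel on \(B(2r)\). For \(|z|\le r\) the kernel is bounded above by a dimensional constant \(C_n\), and \(u-S_{2r}\le u-M_{2r}\le0\) on \(\partial B(2r)\). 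Applying the Poisson inequality to the nonpositive function \(u-M_{2r}\) gives, for \(|z|\le r\),
\[
u(z)-M_{2r}\le c_n\,(S_{2r}-M_{2r}),\qquad 0<c_n\le1 .
\]
Here \(c_n\) is the lower bound of the Poisson kernel, since the integrand is nonpositive. Hence
\[
M_{2r}-M_r\ge c_n(M_{2r}-S_{2r}),
\]
that is, \(M_{2r}-S_{2r}\le c_n^{-1}(M_{2r}-M_r)\). The right-hand side tends to zero by Theorem~\ref{thm:scale-increment}, so \(M_r-S_r\to0\).

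For the radial regularization, \(\rho\) is radial, so
\[
u_r(0)=\int_0^1 S(u,rt)\,d\mu(t)
\]
for a probability measure \(\mu\) on \([0,1]\) with density \(\sigma_{2n-1}t^{2n-1}\rho(t)\). Therefore
\[
S_r-u_r(0)=\int_0^1\bigl(\phi(x)-\phi(x+\log t)\bigr)\,d\mu(t).
\]
By convexity, each integrand is bounded by \(|\log t|\) times the slope \(\phi'(x)\), and \(\phi'(x)\to\nu(u,0)=0\) by Lemma~\ref{lem:convex-quotient}. Since \(\rho\) vanishes near \(|w|=1\) but not necessarily near \(0\), the function \(|\log t|\) may not be bounded on the support of \(\mu\). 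It is still \(\mu\)-integrable because \(t^{2n-1}|\log t|\) is bounded. Dominated convergence then gives \(S_r-u_r(0)\to0\). Adding these limits yields \(M_r-V_r\to0\) and \(M_r-u_r(0)\to0\).

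\emph{Nonnegativity and converse.} The four gaps are nonnegative: \(V_r\le S_r\le M_r\) by the submean property, and \(u_r(0)\le S_r\) by the representation above together with the monotonicity of \(S\). The converse for \(S_r-V_r\) is Theorem~\ref{thm:SV}, and \(M_r-V_r\ge S_r-V_r\) reduces that case to it. For \(S_r-u_r(0)\), we fix \(t_0\in(0,1)\) with \(\mu([0,t_0])>0\). Since \(\phi\) is increasing, the integrand in the representation is nonnegative and at least \(\phi(x)-\phi(x+\log t_0)\) on \([0,t_0]\). Hence
\[
S_r-u_r(0)\ge\mu([0,t_0])\,\bigl(S_r-S_{t_0r}\bigr),
\]
so the fixed-scale increment of \(S\) tends to zero, and Theorem~\ref{thm:scale-increment} gives \(\nu(u,0)=0\). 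The case \(M_r-u_r(0)\) reduces to this one since \(M_r-u_r(0)\ge S_r-u_r(0)\).

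\emph{Main obstacle.} The delicate step is \(M_r-S_r\to0\), which requires the Poisson-kernel lower bound. The estimate \(P(z,\zeta)\ge c_n>0\) for \(|z|\le r\), \(|\zeta|=2r\) follows from the explicit kernel \((4r^2-|z|^2)(2r)^{2n-2}/|z-\zeta|^{2n}\) after normalization. The rest is bookkeeping with the convexity lemmas already established.
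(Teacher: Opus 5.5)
Your proposal is correct and follows the paper's proof essentially step by step. Your Poisson-kernel lower bound \(P\ge c_n\) is exactly the paper's Harnack-type estimate \(M(u,t)\le\lambda S(u,2t)+(1-\lambda)M(u,2t)\) with \(\lambda=c_n\), the \(|\log t|\,\phi'(x)\) bound is the paper's logarithmic-convexity majorant, and your \(t_0\) plays the role of the paper's \(a\) in the converse. The only slip is cosmetic: the inequality \(u-S_{2r}\le u-M_{2r}\) is backwards, since \(S_{2r}\le M_{2r}\), but you never use it, and the argument only needs \(u-M_{2r}\le0\) on \(\partial B(2r)\).
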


\begin{proof}
We first show that \(M_r-S_r\to0\). A Harnack-type estimate gives a constant \(0<\lambda<1\), depending only on \(n\), such that
\[
        M(u,t)\leq \lambda S(u,2t)+(1-\lambda)M(u,2t).
\]
Thus
\[
        0\leq M(u,2t)-S(u,2t)
        \leq C\bigl(M(u,2t)-M(u,t)\bigr).
\]
By Theorem \ref{thm:scale-increment}, the right-hand side tends to zero.

Next, Theorem \ref{thm:SV} gives
\[
        S_r-V_r\to \frac{1}{2n}\nu(u,0)=0.
\]
Finally, the radial regularization satisfies
\[
        u_r(0)=\int_0^1S_{tr}\widetilde\rho(t)\,dt,
\]
where \(\widetilde\rho\geq0\) and
\(\int_0^1\widetilde\rho(t)\,dt=1\). Hence
\[
        0\leq S_r-u_r(0)
        =\int_0^1(S_r-S_{tr})\widetilde\rho(t)\,dt\to0.
\]
Indeed, the integrand tends to zero for each \(t>0\) by
Theorem~\ref{thm:scale-increment}, and logarithmic convexity provides an
integrable majorant.

For the converse, the condition \(S_r-V_r\to0\) implies \(\nu(u,0)=0\) by
Theorem~\ref{thm:SV}. Choose \(0<a<1\) such that
\(\operatorname{supp}\rho\subset B(a)\). Since \(S_r\) is increasing,
\[
        0\leq S_r-S_{ar}\leq S_r-u_r(0).
\]
Thus \(S_r-u_r(0)\to0\) also implies \(\nu(u,0)=0\), by
Theorem~\ref{thm:scale-increment}. Finally,
\[
        M_r-V_r\geq S_r-V_r,
        \qquad
        M_r-u_r(0)\geq S_r-u_r(0),
\]
so either of the remaining two conditions yields the same conclusion.
\end{proof}

\section{Mean oscillation}\label{sec:mean-oscillation}

This section gives a pointwise mean-oscillation characterization of the
vanishing of the Lelong number, placing the condition \(\nu(u,0)=0\) in the
framework of VMO. Biard and Wu \cite{BiardWu} proved that a plurisubharmonic
function is locally VMO precisely when its Lelong number vanishes at every
point. In the centered, pointwise setting considered here, the argument
reduces to the mean-value estimates established above and the
Chern--Levine--Nirenberg inequality.

Define the local \(L^1\)-mean oscillation by
\[
        \omega_1(u,r)
        :=
        \frac{1}{|B(r)|}\int_{B(r)}|u-V_r|\,dV.
\]

\begin{theorem}[Theorem~C]\label{thm:VMO}
Let \(u\in\PSH\) near the origin. Then
\[
        \nu(u,0)=0
\]
if and only if
\[
        \omega_1(u,r)\to0.
\]
\end{theorem}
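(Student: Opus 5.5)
Forward direction. Assume \(\nu(u,0)=0\). We may assume \(u\le 0\) near the origin. Since \(u\le M_r\) on \(B(r)\) and \(V_r\le M_r\), we have
\[
|u-V_r|\le (M_r-u)+(M_r-V_r)
\]
on \(B(r)\). Averaging over \(B(r)\) gives
\[
\omega_1(u,r)\le 2\,(M_r-V_r).
\]
By Theorem~\ref{thm:agreement}, \(M_r-V_r\to0\), hence \(\omega_1(u,r)\to0\). This direction is routine.

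Converse. Assume \(\omega_1(u,r)\to0\) and suppose, for contradiction, that \(\nu:=\nu(u,0)>0\). Rescale by setting
\[
v_r(w):=u(rw)-V_r \quad\text{on } B(1).
\]
Then each \(v_r\) is plurisubharmonic with \(\int_{B(1)}v_r\,dV=0\) and \(\|v_r\|_{L^1(B(1))}=|B(1)|\,\omega_1(u,r)\to0\).

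The plan is to show that a positive Lelong number forces a definite amount of \(L^1\)-oscillation at every scale. The Lelong number is invariant under this rescaling, so \(\nu(v_r,0)=\nu\) for all \(r\). On the other hand, the Chern--Levine--Nirenberg inequality, or more simply the locally uniform control of \(dd^c\) masses by \(L^1\) norms, bounds the trace measure of \(dd^c v_r\) on \(B(1/2)\) by \(C\|v_r\|_{L^1(B(1))}\). Concretely, take \(\chi\in C_c^\infty(B(1))\) with \(\chi=1\) on \(B(1/2)\). Then
\[
\int_{B(1/2)} dd^c v_r\wedge (dd^c|w|^2)^{n-1}\le \int v_r\, dd^c\chi\wedge(dd^c|w|^2)^{n-1}\le C\|v_r\|_{L^1}.
\]
By the monotonicity of Lelong's ratio, the left side is at least \(c_n\,\nu(v_r,0)(1/2)^{2n-2}\). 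Hence
\[
\nu \le C'\,\omega_1(u,r)\to 0,
\]
which is a contradiction.

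Alternatively, I could avoid currents and argue with means alone. The \(L^1\) convergence \(v_r\to0\) on \(B(1)\) and \(\int v_r=0\) would give, by weak compactness of subharmonic functions and Hartogs-type upper semicontinuity, \(S(v_r,1/2)-V(v_r,1/2)\to0\), i.e. \(S_{r/2}-V_{r/2}\to0\). Theorem~\ref{thm:SV} then yields \(\nu=0\). I would try the CLN route first, since it gives the estimate \(\nu(u,0)\lesssim \liminf_{r\to0^+}\omega_1(u,r)\) directly.

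The main obstacle is justifying that the trace-mass estimate holds with the mean subtracted. This works because \(dd^c\) kills constants, so \(dd^c v_r=dd^c(u(r\cdot))\) and the bound only involves \(\|v_r\|_{L^1}\). It remains to check the normalization linking the trace mass to the Lelong number; with the convention \(\nu=\lim_{r\to0^+} S_r/\log r\), this is the standard Jensen--Lelong formula.
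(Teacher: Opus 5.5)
Your proposal is correct and follows the paper's proof. The forward direction is the same bound $\omega_1(u,r)\le 2(M_r-V_r)$ combined with Theorem~\ref{thm:agreement}, and the converse is the same Chern--Levine--Nirenberg integration by parts against a cut-off combined with the trace-measure description of $\nu(u,0)$; rescaling to $v_r$ and using the monotonicity of Lelong's ratio just turns the paper's $\limsup$ estimate into the cleaner bound $\nu(u,0)\le C'\omega_1(u,r)$ for every small $r$, and your alternative mean-value route through Theorem~\ref{thm:SV} would also work.
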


\begin{proof}
Assume first that \(\nu(u,0)=0\). By Theorem \ref{thm:agreement}, \(M_r-V_r\to0\). Since \(u\leq M_r\) on \(B(r)\),
\[
\begin{aligned}
        \int_{B(r)}|u-V_r|\,dV
        &\leq
        \int_{B(r)}|u-M_r|\,dV
        +
        \int_{B(r)}|M_r-V_r|\,dV  \\
        &=2|B(r)|(M_r-V_r).
\end{aligned}
\]
Thus \(\omega_1(u,r)\to0\).

Conversely, suppose \(\omega_1(u,r)\to0\). Let \(\chi_r\in C_c^\infty(B(2r))\) be a cut-off function with \(\chi_r\equiv1\) on \(B(r)\) and
\[
        |\ddc\chi_r|\leq \frac{C}{r^2}\ddc |z|^2.
\]
The Chern--Levine--Nirenberg inequality \cite{CLN}, applied to \(u-V_{2r}\), gives
\[
        \int_{B(r)}
        \ddc u\wedge(\ddc|z|^2)^{n-1}
        \leq
        \frac{C}{r^2}
        \int_{B(2r)}|u-V_{2r}|\,dV.
\]
The trace-measure description of the Lelong number gives
\[
        \nu(u,0)
        \lesssim
        \limsup_{r\to0}
        \frac{r^2}{|B(r)|}
        \int_{B(r)}\ddc u\wedge(\ddc|z|^2)^{n-1}.
\]
Therefore
\[
        \nu(u,0)
        \lesssim
        \limsup_{r\to0}
        \frac{1}{|B(2r)|}
        \int_{B(2r)}|u-V_{2r}|\,dV=0.
\]
Hence \(\nu(u,0)=0\).
\end{proof}

\section{Deep sublevel sets and capacity}

We now turn to a nonlinear characterization in terms of the pluripotential
size of deep sublevel sets. Let \(\Omega\Subset\mc^n\) be a bounded
hyperconvex domain.

\begin{definition}\label{def:bt-capacity}
For a Borel set \(E\subset\Omega\), define
\[
        \Cap_\Omega(E)
        :=
        \sup\left\{
        \int_E(\ddc\varphi)^n:
        \varphi\in\PSH(\Omega),\ -1\leq\varphi\leq0
        \right\}.
\]
\end{definition}

\begin{definition}
The relative extremal function of \(E\subset\Omega\) is
\[
        h_{E,\Omega}(z)
        :=
        \left(\sup\{\varphi(z):
        \varphi\in\PSH(\Omega),\
        \varphi\leq0,\
        \varphi\leq-1\text{ on }E\}\right)^{*},
\]
where \({}^{*}\) denotes upper-semicontinuous regularization.
\end{definition}

Fix \(0<\theta<1\). For \(E\subset B(r)\), set
\[
        \Gamma_r^\theta(E)
        :=
        -\sup_{B(\theta r)}h_{E,B(2r)}.
\]
This quantity measures, in terms of relative extremal functions, the influence
of \(E\) on the central ball \(B(\theta r)\).

Let
\[
        M_{2r}:=\sup_{B(2r)}u,
\]
and for \(t>0\) define
\[
        E_{r,t}:=B(r)\cap\{M_{2r}-u>t\}.
\]

\begin{theorem}[Theorem~D, parts \textup{(i)}--\textup{(ii)}]
\label{thm:gamma}
Let \(u\in\PSH^-(B(1))\) and fix \(0<\theta<1\). Then
\[
        \nu(u,0)=0
\]
if and only if, for every fixed \(t>0\),
\[
        \Gamma_r^\theta(E_{r,t})\to0,
        \qquad r\to0.
\]
\end{theorem}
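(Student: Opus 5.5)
We prove both implications by comparing the relative extremal function \(h_{E,B(2r)}\) with explicit plurisubharmonic competitors built from \(u\) itself. Throughout, \(E=E_{r,t}\) and \(h=h_{E,B(2r)}\). We use that \(-1\le h\le 0\) and that \(h\) is the largest such function that is \(\le -1\) on \(E\) (quasi-everywhere).

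\emph{(i)\(\Rightarrow\)(ii).} Assume \(\nu(u,0)=0\). The natural competitor is
\[
        \varphi:=\frac{u-M_{2r}}{t}\in\PSH(B(2r)).
\]
It satisfies \(\varphi\le 0\) on \(B(2r)\) and \(\varphi<-1\) on \(E\). It is not bounded below, but \(\max(\varphi,-1)\) is an admissible competitor, so \(h\ge\max(\varphi,-1)\ge\varphi\). Hence
\[
        \sup_{B(\theta r)}h\;\ge\;\frac{M_{\theta r}-M_{2r}}{t},
\]
and therefore
\[
        0\le\Gamma_r^\theta(E_{r,t})\le\frac{M_{2r}-M_{\theta r}}{t}.
\]
By Theorem~\ref{thm:scale-increment} applied with \(A=2/\theta\), the right-hand side tends to \(0\) for each fixed \(t\). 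This direction is routine.

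\emph{(ii)\(\Rightarrow\)(i).} Suppose \(\nu:=\nu(u,0)>0\); we will show that \(\Gamma_r^\theta(E_{r,t})\) stays bounded below for some \(t>0\). Since \(u\le\nu\log|z|+C\) near \(0\), on a small ball we have
\[
        u\le M_{2r}-\tfrac{\nu}{2}\log\tfrac{1}{\delta}
\]
on \(B(\delta r)\) for a fixed small \(\delta\) (independent of \(r\)), as soon as \(M_{2r}-\nu\log r\) is controlled. Concretely, I would use \(M_{2r}\ge \nu\log(2r)+o(\cdot)\) together with the log-convexity slope bound \(M_{2r}-M_{\delta r}\ge \nu\log(2/\delta)\). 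The latter holds because the increments of the convex function \(x\mapsto M(u,e^x)\) have slope at least \(\nu\), by Lemma~\ref{lem:log-convexity} and Proposition~\ref{prop:mean-lelong}. Choosing \(t=\nu\log(2/\delta)/2\), we get that \(E_{r,t}\) contains the whole ball \(B(\delta r)\). This is a point-by-point claim on \(B(\delta r)\) and requires that \(\sup_{B(\delta r)}u=M_{\delta r}\), which holds by definition, so the inclusion is immediate. Then
\[
        h_{E,B(2r)}\le h_{B(\delta r),B(2r)}=\max\!\left(\frac{\log(|z|/2r)}{\log(2/\delta)},-1\right)
\]
by monotonicity of the relative extremal function and the explicit formula for balls. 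This gives
\[
        \Gamma_r^\theta(E_{r,t})\ge \frac{\log(2/\theta)}{\log(2/\delta)}>0
\]
uniformly in \(r\), which contradicts (ii).

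The main obstacle is securing a \(t>0\) independent of \(r\) in the converse. The slope argument must give \(M_{2r}-M_{\delta r}\ge c>0\) for all small \(r\), which follows because the difference quotients of a convex function are nondecreasing and bounded below by the asymptotic slope \(\nu\) at \(-\infty\). I would also double-check that \(\delta<\theta\) is not needed: the explicit extremal function of \(B(\delta r)\) is negative on all of \(B(\theta r)\) with the stated bound, which only requires \(\theta<2\).
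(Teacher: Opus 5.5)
Your proof is correct and follows the paper's argument: the competitor $(u-M_{2r})/t$ gives $\Gamma_r^\theta(E_{r,t})\le (M_{2r}-M_{\theta r})/t$ for (i)$\Rightarrow$(ii), and for the converse the slope $\nu>0$ of $M$ in $\log r$ forces a concentric ball into $E_{r,t}$. The paper simply takes $\delta=\theta$, so that $B(\theta r)\subset E_{r,t}$ and $\Gamma_r^\theta(E_{r,t})=1$ outright, which makes the explicit ball extremal function unnecessary. If you keep your version, take $\delta<1$ so that $B(\delta r)\subset B(r)$, and state the lower bound as $\min\{1,\log(2/\theta)/\log(2/\delta)\}$, since your bound as written exceeds $1$ when $\delta>\theta$.
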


\begin{proof}
Assume \(\nu(u,0)=0\). Suppose, to the contrary, that there exist \(t_0>0\), \(\eta>0\), and \(r_j\to0\) such that
\[
        \Gamma_{r_j}^\theta(E_{r_j,t_0})\geq\eta.
\]
Then
\[
        h_{E_{r_j,t_0},B(2r_j)}\leq-\eta
        \quad\text{on }B(\theta r_j).
\]
Set
\[
        w_j(z):=\frac{u(z)-M_{2r_j}}{t_0}.
\]
Then \(w_j\in\PSH(B(2r_j))\), \(w_j\le0\), and \(w_j<-1\) on \(E_{r_j,t_0}\). By the defining extremal property,
\[
        w_j\leq h_{E_{r_j,t_0},B(2r_j)}.
\]
Hence
\[
        u(z)\leq M_{2r_j}-\eta t_0,
        \qquad z\in B(\theta r_j).
\]
Taking suprema gives
\[
        M_{\theta r_j}\leq M_{2r_j}-\eta t_0,
\]
or
\[
        M_{2r_j}-M_{\theta r_j}\geq \eta t_0.
\]
This contradicts Theorem \ref{thm:scale-increment}.

Conversely, assume \(\Gamma_r^\theta(E_{r,t})\to0\) for every \(t>0\). If
\(\nu(u,0)>0\), then Theorem~\ref{thm:scale-increment} yields
\(t_0>0\) and a sequence \(r_j\to0\) such that
\[
        M_{2r_j}-M_{\theta r_j}\geq 2t_0.
\]
For \(z\in B(\theta r_j)\) this implies
\[
        M_{2r_j}-u(z)\geq 2t_0>t_0,
\]
so that \(B(\theta r_j)\subset E_{r_j,t_0}\). Hence
\[
        h_{E_{r_j,t_0},B(2r_j)}\leq-1
        \quad\text{on }B(\theta r_j),
\]
and therefore \(\Gamma_{r_j}^\theta(E_{r_j,t_0})=1\), a contradiction.
\end{proof}

Before passing from the relative-extremal criterion to Bedford--Taylor
capacity, we recall two standard local tools.  The first is the Hartogs
compactness principle for plurisubharmonic functions.  We use both its
compactness statement and its upper-semicontinuity consequence; see, for
example, \cite[Theorem~1.46]{GZ}.

\begin{lemma}\label{lem:hartogs}
Let \(\Omega\subset\mc^n\) be a domain and let \(v_j\in\PSH(\Omega)\) be
locally uniformly bounded from above. Then either \(v_j\to-\infty\) locally
uniformly on every compact subset of \(\Omega\), or a subsequence converges
in \(L^1_{\loc}(\Omega)\) to a function \(v\in\PSH(\Omega)\).

Moreover, if
\[
        v_j\to v\qquad\text{in }L^1_{\loc}(\Omega),
\]
then for every compact set \(K\Subset\Omega\),
\[
        \limsup_{j\to\infty}\sup_K v_j
        \leq
        \sup_K v.
\]
\end{lemma}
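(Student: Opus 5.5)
The lemma has two parts, a compactness dichotomy and a limsup inequality on compacts. I would derive both from the sub-mean value property together with the weak compactness of positive measures.

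\textbf{Compactness.} Assume \(v_j\) does not tend to \(-\infty\) locally uniformly. Then there is a compact set \(K_0\) and a subsequence with \(\sup_{K_0}v_j\geq -C\); by upper semicontinuity we can pick points \(z_j\in K_0\) with \(v_j(z_j)\geq -C-1\). For the \(L^1_{\loc}\) bound I would use the following argument.
\begin{itemize}
\item Fix a ball \(B(a,\delta)\Subset\Omega\) around a limit point \(a\) of the \(z_j\), with \(v_j\le A\) on it after subtracting constants.
\item The sub-mean inequality on slightly shifted balls bounds \(\int_{B(a,\delta/2)}|v_j|\,dV\) uniformly.
\item A chaining argument through overlapping balls, using connectedness of \(\Omega\), propagates this bound to every compact subset.
\end{itemize}
The key point in the chaining is that a ball on which the \(L^1\) norm is large would force the values at nearby centers to be very negative. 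This contradicts the lower bound coming from the neighboring ball.

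With uniform \(L^1_{\loc}\) bounds, the Laplacians \(\Delta v_j\) are positive measures with locally bounded mass, obtained by testing against cut-offs. Riesz decomposition then gives \(v_j = (\text{Newtonian potential}) + (\text{harmonic})\). Weak compactness of the measures and normal-family compactness of the harmonic parts bounded in \(L^1\) give a subsequence converging in \(L^1_{\loc}\). Alternatively, uniform local bounds on the gradients in \(L^1\) allow Rellich's theorem to be invoked directly.

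The limit \(v\), taken as \((\limsup v_j)^*\) or as a suitable representative, is identified as plurisubharmonic because mollifications \(v_j*\rho_\epsilon\) converge locally uniformly to \(v*\rho_\epsilon\), which are plurisubharmonic and decrease as \(\epsilon\to0\).

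\textbf{Limsup inequality.} Fix \(K\) and \(\epsilon>0\). Since \(v*\rho_\delta\downarrow v\) and \(v\) is upper semicontinuous, Dini-type compactness gives \(\delta\) with \(v*\rho_\delta\le \sup_K v+\epsilon\) on \(K\), up to shrinking. By the sub-mean property for plurisubharmonic \(v_j\),
\[
v_j\le v_j*\rho_\delta \quad\text{on } K.
\]
In addition, \(v_j*\rho_\delta\to v*\rho_\delta\) uniformly on \(K\), since \(L^1_{\loc}\) convergence is enough when testing against a fixed smooth kernel. Combining these gives \(\limsup_j\sup_K v_j\le \sup_K v+\epsilon\).

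\textbf{Main obstacle.} The delicate step is the Dini-type bound \(\sup_K (v*\rho_\delta)\to\sup_K v\). Since \(v\) is only upper semicontinuous, I would handle it by a compactness argument. Take points \(z_\delta\in K\) nearly maximizing \(v*\rho_\delta\), pass to a limit point \(z\), and use upper semicontinuity of \(v\) at \(z\) to bound \(v\) on small balls near \(z\). This yields \(\limsup_{\delta\to0} (v*\rho_\delta)(z_\delta)\le v(z)\le\sup_K v\). The compactness part is standard, and I would simply cite \cite[Theorem~1.46]{GZ} for it.
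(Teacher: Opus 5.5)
Your proposal is correct and follows essentially the paper's route. Both cite the compactness alternative from \cite[Theorem~1.46]{GZ}, and both get the limsup bound from the sub-mean inequality, convergence of fixed-scale averages under $L^1_{\loc}$ convergence, and then shrinking the averaging scale using upper semicontinuity (equivalently, the mean-value property) of $v$. The only difference is the order of limits. The paper first extracts near-maximizing points $z_j\to z_0$ and shrinks ball averages at the single point $z_0$. You pass $j\to\infty$ uniformly on $K$ at a fixed mollification scale, which is why you need the extra usc compactness argument to get $\limsup_{\delta\to0}\sup_K v*\rho_\delta\le\sup_K v$.
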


\begin{proof}
The compactness alternative is the standard Hartogs compactness theorem for
plurisubharmonic functions; see \cite[Theorem~1.46]{GZ}.  We only record the
short argument for the last assertion, since this is the form used below.
Suppose that the asserted inequality fails.  Then there exist
\(\varepsilon>0\), a subsequence, still denoted by \(v_j\), and points
\(z_j\in K\) such that
\[
        v_j(z_j)>\sup_K v+\varepsilon.
\]
After passing to a further subsequence we may assume that \(z_j\to z_0\in K\).
Choose \(\rho>0\) so small that
\(\overline{B(z_0,2\rho)}\Subset\Omega\).  For all sufficiently large \(j\),
\(B(z_j,\rho)\Subset B(z_0,2\rho)\).  By the submean inequality,
\[
 v_j(z_j)
 \leq \frac{1}{|B(\rho)|}\int_{B(z_j,\rho)}v_j\,dV.
\]
Since \(v_j\to v\) in \(L^1(B(z_0,2\rho))\), while translations are
continuous in \(L^1\),
\[
 \int_{B(z_j,\rho)}v_j\,dV
 \longrightarrow
 \int_{B(z_0,\rho)}v\,dV.
\]
Hence
\[
 \sup_K v+\varepsilon
 \leq
 \frac{1}{|B(\rho)|}\int_{B(z_0,\rho)}v\,dV.
\]
Letting \(\rho\downarrow0\) and using the mean-value characterization of a
plurisubharmonic function gives
\[
        \sup_K v+\varepsilon\leq v(z_0)\leq\sup_K v,
\]
a contradiction.
\end{proof}

The second tool is the local \(L^1\)-form of the
Chern--Levine--Nirenberg inequality.  In the form needed here it follows by
repeated integration by parts from the usual CLN estimate; see
\cite[Chapter~III, \S3]{DemaillyBook} and \cite[Chapter~3]{GZ}.

\begin{lemma}\label{lem:capacity-convergence}
Let \(\Omega\Subset\mc^n\) be a bounded domain and let
\[
        K\Subset\Omega'\Subset\Omega.
\]
There exists a constant \(C=C(K,\Omega',\Omega)>0\) such that, for every
\(v\in\PSH(\Omega)\) with \(v\leq0\) and every \(s>0\),
\[
        \Cap_\Omega\bigl(K\cap\{v<-s\}\bigr)
        \leq
        \frac{C}{s}\,\|v\|_{L^1(\Omega')}.
\]
\end{lemma}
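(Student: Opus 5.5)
The plan is to prove the estimate one test function at a time and then take the supremum. Fix \(\varphi\in\PSH(\Omega)\) with \(-1\le\varphi\le0\). On \(K\cap\{v<-s\}\) we have \(1<(-v)/s\), so
\[
        \int_{K\cap\{v<-s\}}(\ddc\varphi)^n\le\frac1s\int_K(-v)\,(\ddc\varphi)^n .
\]
It therefore suffices to prove a Chern--Levine--Nirenberg-type bound
\[
        \int_K(-v)\,(\ddc\varphi)^n\le C\,\|v\|_{L^1(\Omega')} ,
\]
with \(C\) independent of \(v\) and \(\varphi\). The supremum over \(\varphi\) then gives the lemma.

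\emph{Reduction to bounded \(v\).} Replace \(v\) by \(v_N:=\max(v,-N)\). Then \(-v_N\uparrow -v\) and \(\|v_N\|_{L^1(\Omega')}\le\|v\|_{L^1(\Omega')}\), so monotone convergence reduces the estimate to bounded \(v\). For such \(v\) all the Bedford--Taylor products below are well defined, and integration by parts against closed positive currents is legitimate.

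\emph{Gluing \(\varphi\) to a smooth function.} Choose an intermediate domain \(K\Subset\Omega_1\Subset\Omega'\) and a smooth strictly psh function \(\rho\) (for instance a translate of \(|z|^2\) minus a constant) such that \(\rho<-1/A\) on \(K\) and \(\rho>0\) near \(\partial\Omega_1\), for a suitable \(A>0\). Set
\[
        \psi:=\max(\varphi,A\rho)\ \text{on }\Omega_1 .
\]
Then \(\psi=\varphi\) on a neighbourhood of \(K\), while \(\psi=A\rho\) near \(\partial\Omega_1\). In particular \(\psi-A\rho\) has compact support in \(\Omega_1\) and is uniformly bounded, with bounds depending only on \(K,\Omega',\Omega\). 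Consequently
\[
        \int_K(-v)(\ddc\varphi)^n\le\int_{\Omega_1}(-v)(\ddc\psi)^n .
\]

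\emph{Peeling off the factors \(\ddc\psi\).} I would estimate the right-hand side by induction on the number of factors \(\ddc\psi\). For that I write
\[
        T=(\ddc\psi)^{k-1}\wedge(\ddc\rho)^{n-k}
\]
and split \(\ddc\psi=\ddc(\psi-A\rho)+A\,\ddc\rho\).
\begin{itemize}
\item In the term with \(\ddc(\psi-A\rho)\), the compact support of \(\psi-A\rho\) allows integration by parts:
\[
        \int(-v)\,\ddc(\psi-A\rho)\wedge T=\int(A\rho-\psi)\,\ddc v\wedge T\le C_1\int_{\Omega_1}\ddc v\wedge T .
\]
\item The term with \(A\,\ddc\rho\) is of the same form but has one fewer factor \(\ddc\psi\), so it is handled by the induction hypothesis.
\end{itemize}
After all factors are peeled off, everything reduces to two kinds of quantities:
\begin{itemize}
\item the base case \(\int_{\Omega_1}(-v)(\ddc\rho)^n\le C\|v\|_{L^1(\Omega')}\), which is immediate since \(\rho\) is smooth;
\item the masses \(\int_{\Omega_1}\ddc v\wedge(\ddc\psi)^{j}\wedge(\ddc\rho)^{n-1-j}\).
\end{itemize}
For the masses, the classical CLN inequality applied with a slightly larger domain \(\Omega_2\Subset\Omega'\) bounds them by \(C\|v\|_{L^1(\Omega_2)}\prod\|\psi\|_\infty\), hence by \(C\|v\|_{L^1(\Omega')}\). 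It is cleanest to glue \(\psi\) on \(\Omega_2\supset\overline{\Omega_1}\), so that the CLN step has room.

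\emph{Main obstacle.} The step I expect to be delicate is the integration by parts with the unbounded-from-below weight \(-v\) and the non-smooth \(\psi\). One has to check that no boundary terms survive; this is guaranteed because \(\psi-A\rho\) is compactly supported. One must also track the sign correctly, since \(\ddc v\ge0\) is paired with the bounded factor \(A\rho-\psi\). I would first verify this for smooth \(v,\varphi\) via standard regularization, and then pass to the limit using Bedford--Taylor monotone continuity.
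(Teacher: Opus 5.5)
Your proposal follows the paper's proof. You fix an admissible $\varphi$, use $1_{K\cap\{v<-s\}}\le -v/s$, bound $\int_K(-v)(dd^c\varphi)^n$ by the $L^1$ form of the Chern--Levine--Nirenberg inequality, reduce to bounded $v$ by truncation, and take the supremum over $\varphi$. The paper only cites the $L^1$ CLN estimate, whereas you also sketch a proof of it. Two remarks on that sketch follow.

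First, the gluing function does not exist for general $K$. You cannot always find one $\rho$ with $\rho<-1/A$ on $K$, $\rho>0$ near $\partial\Omega_1$, and $K\Subset\Omega_1\Subset\Omega'$. Take $K$ the unit sphere and $\Omega'$ a thin spherical shell in $\mc^2$, and consider the discs $\{(\zeta,t):|\zeta|^2\le 1-t^2\}$ for $0<t<1$. The maximum principle on these discs shows that the set of $t$ with disc inside $\Omega_1$ is nonempty, open and closed. So $\Omega_1$ would have to contain all points $(0,t)$, which leave the shell.

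The fix is to split $K$ first into finitely many compact pieces, each lying in a ball $B(a,r)$ with $B(a,3r)\Subset\Omega'$. On each piece, $\rho=|z-a|^2-2r^2$ with $\Omega_1=B(a,2r)$ works. Then sum the resulting bounds. In the paper's only application, $K=\overline{B(1)}$ and $\Omega'=B(3/2)$, and this issue does not arise.

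Second, the peeling step is simpler than you make it. Write $I_k:=\int_{\Omega_1}(-v)(dd^c\psi)^k\wedge(dd^c\rho)^{n-k}$. Since $\psi\ge A\rho$ and $dd^c v\wedge T\ge0$, the cross term $\int(A\rho-\psi)\,dd^c v\wedge T$ is $\le 0$, not merely bounded. Hence $I_k\le A\,I_{k-1}$, and so $I_n\le A^n I_0\le C\|v\|_{L^1(\Omega')}$ directly. You therefore do not need the mass bounds for $dd^c v\wedge(dd^c\psi)^j\wedge(dd^c\rho)^{n-1-j}$. Those bounds are themselves an $L^1$-type CLN estimate, not the $L^\infty$ classical one, and would otherwise need their own induction.
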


\begin{proof}
Set \(E_s:=K\cap\{v<-s\}\).  Let
\(\varphi\in\PSH(\Omega)\) be admissible for \(\Cap_\Omega\), so that
\(-1\leq\varphi\leq0\).  On \(E_s\),
\[
        1_{E_s}\leq\frac{-v}{s},
\]
and therefore
\[
        \int_{E_s}(\ddc\varphi)^n
        \leq
        \frac1s\int_K(-v)(\ddc\varphi)^n.
\]
The generalized \(L^1\) Chern--Levine--Nirenberg estimate gives
\[
        \int_K(-v)(\ddc\varphi)^n
        \leq
        C\,\|v\|_{L^1(\Omega')}
        \|\varphi\|_{L^\infty(\Omega')}^n
        \leq
        C\,\|v\|_{L^1(\Omega')},
\]
where \(C\) depends only on the fixed inclusions
\(K\Subset\Omega'\Subset\Omega\).  The general case follows from the locally bounded case by truncation \(v_j=\max\{v,-j\}\) and monotone convergence.  Taking the supremum over
all admissible \(\varphi\) proves the claim.
\end{proof}

We now compare the relative extremal depth with Bedford--Taylor capacity.
Besides Lemmas~\ref{lem:hartogs} and \ref{lem:capacity-convergence}, we use the
standard Bedford--Taylor facts that relative extremal functions of relatively
compact sets tend to zero at the boundary, equal \(-1\) quasi-everywhere on
the underlying set, and satisfy
\[
        \Cap_\Omega(E)
        =\int_\Omega(\ddc h_{E,\Omega})^n
\]
for Borel sets, with the usual Choquet/outer-capacity interpretation; see
\cite[Sections~5--8]{BedfordTaylor1982}.

\begin{lemma}\label{lem:depth-capacity}
Let \(0<\theta<1\), and let \(F_j\subset B(1)\) be Borel sets. Then
\[
        \Cap_{B(2)}(F_j)\to0
        \quad\Longleftrightarrow\quad
        -\sup_{B(\theta)}h_{F_j,B(2)}\to0.
\]
\end{lemma}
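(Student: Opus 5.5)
Write \(h_j:=h_{F_j,B(2)}\) and \(\delta_j:=-\sup_{B(\theta)}h_j\in[0,1]\). The plan is to prove the two implications separately. The forward direction uses a comparison-principle bound on \(h_j\) in terms of capacity. The reverse direction uses a Hartogs compactness argument together with Lemma~\ref{lem:capacity-convergence}.

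\emph{Capacity small \(\Rightarrow\) depth small.} For \(v\in\PSH(B(2))\), \(v\le0\), I would use the standard estimate
\[
 \Cap_{B(2)}(\{v<-s\})\ \ge\ c\,s^n\quad\text{whenever } \sup_{B(\theta)}v\le -s ,
\]
which holds since \(\{v<-s'\}\supset B(\theta)\) for \(s'<s\). More directly, by Bedford--Taylor, \(h_j=-1\) q.e.\ on \(F_j\), and \(\Cap_{B(2)}(F_j)=\int(\ddc h_j)^n\). If \(h_j\le-\delta_j\) on \(B(\theta)\), I compare \(h_j\) with \(\delta_j\,h_{\overline{B(\theta)},B(2)}\). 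Both functions vanish at \(\partial B(2)\), and on \(B(\theta)\) we have \(h_j\le -\delta_j=\delta_j h_{\overline{B(\theta)},B(2)}\). The comparison principle for total Monge--Amp\`ere masses of functions with zero boundary values, where \(u\le v\) implies \(\int(\ddc u)^n\ge\int(\ddc v)^n\), then gives
\[
 \Cap_{B(2)}(F_j)=\int(\ddc h_j)^n\ \ge\ \delta_j^n\,\Cap_{B(2)}(\overline{B(\theta)}).
\]
The inequality \(h_j\le\delta_j h_{\overline{B(\theta)},B(2)}\) on all of \(B(2)\) needs its own check. It follows from maximality of \(h_{\overline{B(\theta)},B(2)}\) on \(B(2)\setminus\overline{B(\theta)}\), which is explicit and radial there, together with the boundary behaviour. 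Since \(\Cap_{B(2)}(\overline{B(\theta)})>0\), we conclude that \(\Cap\to0\) forces \(\delta_j\to0\).

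\emph{Depth small \(\Rightarrow\) capacity small.} I argue by contradiction and by subsequences. Suppose \(\delta_j\to0\) but \(\Cap_{B(2)}(F_j)\ge\epsilon\) along a subsequence. The functions \(h_j\) satisfy \(-1\le h_j\le0\), so by Lemma~\ref{lem:hartogs} a subsequence converges in \(L^1_{\loc}(B(2))\) to some \(h\in\PSH(B(2))\) with \(-1\le h\le0\). Since \(\sup_{B(\theta)}h_j\to0\), the upper-semicontinuity assertion of Lemma~\ref{lem:hartogs} only gives an upper bound for \(\limsup\). Instead I use the submean inequality at a point \(z_j\in\overline{B(\theta)}\) where \(h_j(z_j)\ge-\delta_j-1/j\). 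Passing to a limit \(z_j\to z_0\), the \(L^1\) averages of \(h\) over small balls about \(z_0\) are \(\ge -o(1)\) after letting \(j\to\infty\). The sub-mean-value inequality for \(h\), together with the facts that \(h\le0\) and \(h\) is psh, then forces \(h\equiv0\) on a ball and hence, by the maximum principle, \(h\equiv0\) on \(B(2)\). So \(h_j\to0\) in \(L^1_{\loc}\).

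Now apply Lemma~\ref{lem:capacity-convergence} with \(K=\overline{B(1)}\), \(\Omega'=B(3/2)\) and \(v=h_j\). Since \(h_j=-1\) q.e.\ on \(F_j\subset K\), outer regularity of capacity gives
\[
 \Cap_{B(2)}(F_j)\le\Cap_{B(2)}\bigl(K\cap\{h_j<-\tfrac12\}\bigr)\le 2C\|h_j\|_{L^1(B(3/2))}\to0,
\]
which is a contradiction. Because every subsequence has a further subsequence along which the capacity tends to zero, the full sequence does too.

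I expect the main obstacle to be the forward comparison step, namely justifying \(h_j\le\delta_j h_{\overline{B(\theta)},B(2)}\) for an arbitrary Borel \(F_j\) where \(h_j\) need not be continuous. I would first treat compact \(F_j\) and then pass to Borel sets via outer/Choquet capacity as indicated. A second delicate point is the use of the q.e.\ identity \(h_j=-1\) on \(F_j\) inside the capacity estimate, since pluripolar exceptional sets have zero capacity.
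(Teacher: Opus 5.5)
Your proof is correct. The forward direction is the paper's argument: compare $h_j$ with $\delta_j\,h_{\overline{B(\theta)},B(2)}$ and use monotonicity of total Monge--Amp\`ere mass for bounded functions with zero boundary values.

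The step you flag as the main obstacle is not one. The function $h_j/\delta_j$ is plurisubharmonic, nonpositive, and $\le -1$ on $B(\theta)$. It is therefore itself a competitor in the envelope defining the extremal function of the ball, so $h_j\le\delta_j\,h_{\overline{B(\theta)},B(2)}$ follows from the definition alone. You need no continuity of $h_j$, no maximality argument on the annulus, and no compact-to-Borel approximation. If you want to avoid checking $h_j\le-\delta_j$ on $\partial B(\theta)$, compare instead with $h_{\overline{B(\theta')},B(2)}$ for some $\theta'<\theta$. Your opening ``standard estimate'' $\Cap\ge c\,s^n$ is not justified by the sentence that follows it, but you never use it.

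The reverse direction is where the routes differ. The paper applies Lemma~\ref{lem:capacity-convergence} to $h_j/\delta_j$ with $s=(2\delta_j)^{-1}$. That is literally the same as your application to $h_j$ with $s=1/2$, so both proofs reach $\Cap_{B(2)}(F_j)\le 2C\|h_j\|_{L^1(B(3/2))}$. What differs is how this norm is shown to be small.

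\begin{itemize}
\item You apply Hartogs compactness to the uniformly bounded family $h_j$, identify every limit as $0$ by the maximum principle, and finish with a subsequence argument. This is purely qualitative.
\item The paper normalizes by the depth and uses Hartogs compactness to bound $\|h_j/\delta_j\|_{L^1(B(3/2))}$ uniformly. This yields the rate $\Cap_{B(2)}(F_j)\le C'\delta_j$. Combined with the forward estimate, it gives a uniform two-sided comparison $c\,\delta^n\le\Cap_{B(2)}(F)\le C'\delta$ for all Borel $F\subset B(1)$, which is more than the sequential statement needs.
\end{itemize}

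Your version is slightly simpler: there is no normalization, and the $-\infty$ alternative in Hartogs' theorem is excluded trivially by $h_j\ge-1$. The price is that you lose the rate.

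Two small remarks. First, the upper-semicontinuity part of Lemma~\ref{lem:hartogs} with $K=\overline{B(\theta)}$ gives exactly the direction you need, namely $0=\lim_j\sup_K h_j\le\sup_K h$. Your hand-made submean argument just re-derives it. Second, the exceptional set $F_j\cap\{h_j>-1\}$ is handled by pluripolar sets having zero capacity together with subadditivity, not by outer regularity.
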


\begin{proof}
Put
\[
        H_j:=h_{F_j,B(2)},
        \qquad
        m_j:=-\sup_{B(\theta)}H_j\in[0,1].
\]

We first prove that capacity convergence implies \(m_j\to0\).  Suppose to the
contrary that, after passing to a subsequence, there is a number
\(\delta>0\) such that \(m_j\geq\delta\) for all \(j\).  Then
\[
        H_j\leq-\delta
        \qquad\text{on }B(\theta).
\]
Let
\[
        G:=h_{\overline{B(\theta)},B(2)}.
\]
Since \(H_j/\delta\leq0\) on \(B(2)\) and
\(H_j/\delta\leq-1\) on \(B(\theta)\), the defining extremal property gives
\[
        \frac{H_j}{\delta}\leq G,
        \qquad\text{hence}\qquad
        H_j\leq\delta G.
\]
Both functions have boundary value \(0\) on \(\partial B(2)\).  The
Bedford--Taylor comparison principle therefore yields
\[
 \delta^n\Cap_{B(2)}\bigl(\overline{B(\theta)}\bigr)
 =\int_{B(2)}(\ddc(\delta G))^n
 \leq
 \int_{B(2)}(\ddc H_j)^n
 =\Cap_{B(2)}(F_j).
\]
The left-hand side is a fixed positive number, contradicting
\(\Cap_{B(2)}(F_j)\to0\).

Conversely, assume that \(m_j\to0\).  If \(m_j=0\) for some \(j\), then the
maximum principle gives \(H_j\equiv0\), hence
\(\Cap_{B(2)}(F_j)=0\); such indices cause no difficulty.  We may therefore
assume \(m_j>0\) and set
\[
        U_j:=\frac{H_j}{m_j}.
\]
Then \(U_j\in\PSH(B(2))\), \(U_j\leq0\), and
\[
        \sup_{B(\theta)}U_j=-1.
\]
We claim that
\[
        \sup_j\|U_j\|_{L^1(B(3/2))}<\infty.
\]
Indeed, the family \((U_j)\) is locally uniformly bounded from above by
\(0\).  By Lemma~\ref{lem:hartogs}, any sequence has either an
\(L^1_{\loc}\)-convergent subsequence or tends locally uniformly to
\(-\infty\).  The latter alternative is impossible because
\(\sup_{B(\theta)}U_j=-1\).  To make the last point explicit, if the claimed
uniform bound failed, we could choose a subsequence with
\(\|U_j\|_{L^1(B(3/2))}\to\infty\).  This subsequence cannot converge locally
uniformly to \(-\infty\), hence Lemma~\ref{lem:hartogs} furnishes a further
subsequence converging in \(L^1(B(3/2))\), which contradicts the divergence of
its \(L^1\)-norms.  Thus the asserted uniform \(L^1\)-bound holds.

By the standard quasi-everywhere property of the relative extremal function,
\[
        H_j=-1
        \qquad\text{quasi-everywhere on }F_j.
\]
Thus, up to a pluripolar set (which has Bedford--Taylor capacity zero), and
for all sufficiently large \(j\),
\[
        F_j
        \subset
        B(1)\cap
        \left\{U_j< -\frac{1}{2m_j}\right\}.
\]
Applying Lemma~\ref{lem:capacity-convergence} with
\(K=\overline{B(1)}\), \(\Omega'=B(3/2)\), \(\Omega=B(2)\), and
\(s=(2m_j)^{-1}\), we obtain
\[
\begin{aligned}
        \Cap_{B(2)}(F_j)
        &\leq
        \Cap_{B(2)}\left(
        \overline{B(1)}\cap
        \left\{U_j< -\frac{1}{2m_j}\right\}
        \right)\\
        &\leq
        2C m_j\,\|U_j\|_{L^1(B(3/2))}
        \leq C' m_j.
\end{aligned}
\]
Since \(m_j\to0\), this proves
\(\Cap_{B(2)}(F_j)\to0\).
\end{proof}

\begin{theorem}[Theorem~D]\label{thm:capacity-characterization}
Let \(u\in\PSH^-(B(1))\). For every fixed \(t>0\),
\[
        \nu(u,0)=0
\]
if and only if
\[
        \Cap_{B(2r)}(E_{r,t})\to0.
\]
More precisely, for fixed \(0<\theta<1\),
\[
        \nu(u,0)=0
        \quad\Longleftrightarrow\quad
        \Gamma_r^\theta(E_{r,t})\to0
        \quad\Longleftrightarrow\quad
        \Cap_{B(2r)}(E_{r,t})\to0
\]
for every fixed \(t>0\).
\end{theorem}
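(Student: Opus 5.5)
The first equivalence, \(\nu(u,0)=0\Leftrightarrow\Gamma_r^\theta(E_{r,t})\to0\) for every \(t>0\), is exactly Theorem~\ref{thm:gamma}. It therefore remains to show, for each fixed \(t>0\), that \(\Gamma_r^\theta(E_{r,t})\to0\) if and only if \(\Cap_{B(2r)}(E_{r,t})\to0\) as \(r\to0^+\). I will obtain this from Lemma~\ref{lem:depth-capacity} by rescaling to the unit scale, combined with a sequential argument.

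Step one is the dilation. For \(r>0\) put \(\delta_r(z):=rz\), and define
\[
F_r:=\delta_r^{-1}(E_{r,t})=\{w\in B(1): M_{2r}-u(rw)>t\}.
\]
Since \(u\) is upper semicontinuous, \(F_r\) is a Borel subset of \(B(1)\). Composition with \(\delta_r\) is a bijection between \(\PSH(B(2r))\) and \(\PSH(B(2))\), and it preserves the normalizations \(-1\le\varphi\le0\), \(\varphi\le0\), and \(\varphi\le-1\) on the set. It also commutes with the upper-semicontinuous regularization. Hence
\[
h_{E_{r,t},B(2r)}\circ\delta_r=h_{F_r,B(2)},
\]
which gives \(\Gamma_r^\theta(E_{r,t})=-\sup_{B(\theta)}h_{F_r,B(2)}\). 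For the capacity, note that \((\ddc(\varphi\circ\delta_r))^n=\delta_r^*(\ddc\varphi)^n\), because the Monge--Amp\`ere operator commutes with holomorphic pullback. This is clear for smooth \(\varphi\), and it extends to bounded plurisubharmonic functions by Bedford--Taylor monotone continuity. Therefore \(\int_{F_r}(\ddc(\varphi\circ\delta_r))^n=\int_{E_{r,t}}(\ddc\varphi)^n\). Taking the supremum over admissible \(\varphi\) yields \(\Cap_{B(2r)}(E_{r,t})=\Cap_{B(2)}(F_r)\).

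Step two is the reduction to sequences. Both conditions say that a function of \(r\) tends to \(0\) as \(r\to0^+\), and this holds if and only if it holds along every sequence \(r_j\to0\). Fix such a sequence and apply Lemma~\ref{lem:depth-capacity} to \(F_j:=F_{r_j}\subset B(1)\). The lemma gives \(\Cap_{B(2)}(F_j)\to0\) if and only if \(-\sup_{B(\theta)}h_{F_j,B(2)}\to0\). By step one, this is the same as \(\Cap_{B(2r_j)}(E_{r_j,t})\to0\) if and only if \(\Gamma_{r_j}^\theta(E_{r_j,t})\to0\). Since the sequence was arbitrary, the two conditions are equivalent for each fixed \(t\). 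Combining this with Theorem~\ref{thm:gamma} gives the full chain of equivalences, and in particular the first statement with the quantifier "for every \(t>0\)".

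The only real obstacle is justifying the scale invariance of the capacity, that is, the transformation rule for \((\ddc\varphi)^n\) under \(\delta_r\) for merely bounded \(\varphi\). I would handle it by approximating \(\varphi\) with a decreasing sequence of smooth plurisubharmonic functions on a slightly smaller ball. I would then use weak convergence of the Monge--Amp\`ere measures together with inner regularity of the capacity over compact subsets, or simply note that the capacity is invariant under biholomorphisms between the domains. Everything else is bookkeeping.
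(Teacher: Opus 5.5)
Your proposal is correct and follows the same route as the paper: rescale to $r^{-1}E_{r,t}\subset B(1)$, use dilation invariance of $\Cap$ and of relative extremal functions, apply Lemma~\ref{lem:depth-capacity}, and conclude with Theorem~\ref{thm:gamma}. Your explicit reduction to arbitrary sequences $r_j\to0$ and your justification of the dilation invariance fill in details that the paper leaves implicit.
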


\begin{proof}
Set
\[
        \widehat E_{r,t}:=r^{-1}E_{r,t}\subset B(1).
\]
The invariance of relative capacity and relative extremal functions under
dilations gives
\[
        \Cap_{B(2r)}(E_{r,t})
        =\Cap_{B(2)}(\widehat E_{r,t})
\]
and
\[
        \Gamma_r^\theta(E_{r,t})
        =-\sup_{B(\theta)}h_{\widehat E_{r,t},B(2)}.
\]
Lemma~\ref{lem:depth-capacity} therefore shows that the two limits are
equivalent. The conclusion now follows from Theorem~\ref{thm:gamma}.
\end{proof}

\end{document}